\theoremstyle{amsplain} % because of problems with cleverref
\theoremstyle{plain}
\newtheorem{theorem}{Theorem}[section]
\newtheorem{lemma}[theorem]{Lemma}
\newtheorem{corollary}[theorem]{Corollary}
\newtheorem{proposition}[theorem]{Proposition}
\theoremstyle{definition}
\newtheorem{remark}[theorem]{Remark}
\newtheorem{examples}[theorem]{Examples}
\newtheorem{question}[theorem]{Question}
\DeclareMathOperator{\re}{Re} 
\DeclareMathOperator{\conv}{conv}
 \DeclareMathOperator{\tr}{Tr}
\renewcommand\emptyset{\varnothing}
\renewcommand\t{\theta} % 
\numberwithin{equation}{section}
\newcommand\mytextwidth{\textwidth}  % Pending: decide on the width of the figures
\title{Higher Rank Numerical Ranges of Jordan-Like Matrices}
\keywords{Numerical Range; Higher-Rank Numerical Range; Jordan Matrix}
\subjclass{15A60,15B05}
\author{Martín Argerami}
\address{Department of Mathematics and Statistics, University of Regina}
\email{argerami@uregina.ca}
\author{Saleh  Mustafa}
\date{\today}
   \def\MR#1{}
\begin{document}

\maketitle

\begin{abstract}
We completely characterize the higher rank numerical range of the matrices of the form $J_n(\alpha)\oplus\beta I_m$, where $J_n(\alpha)$ is the $n\times n$ Jordan block with eigenvalue $\alpha$. Our characterization allows us to obtain concrete examples of several extreme properties of higher rank numerical ranges. 
\end{abstract}

\section{Introduction}

For a linear operator $T$ acting on a Hilbert space $\cH$, its \emph{numerical range} is the set 
\[
\Lambda_1(T)=\{\langle Tx,x\rangle:\ x\in\cH,\ \|x\|=1\}. 
\]
When $\cH$ is finite-dimensional, which will always be the case for us, it is easy to see that $\Lambda_1(T)$ is compact. A less obvious fact is that it is always convex: this is the famous Toeplitz--Hausdorff Theorem. The (closure of, in the infinite-dimensional case) the numerical range of $T$ always contains the spectrum $\sigma(T)$. The numerical range has applications in and is related to many areas, like matrix analysis, inequalities, operator theory, numerical analysis, perturbation theory, quantum computing, and others, see \cite{ando1973,2018BickelGorkin-Compressions,2007GauLi-Preserving,1995Kato-PerturbationTheory,2009KribsPasiekaLaforestRyanDaSilva-ResearchProblems,li2008canonical,2002LiTamWu-Nonnegative,1992Li-Inequalities,1993Spijker-Stability} for a few examples. We refer a reader who is not familiar with the numerical range to \cite[Chapter 1]{roger1994topics}.

Being such a well-known and important object, several generalizations of the numerical range have been considered, though we will only mention two of them. If we write 
\[
\Lambda_1(T)=\{\tr(TP):\ P\ \text{ is a projection of rank one}\}
\]
we get a generalization by taking different values for the rank of $P$; that way we get Halmos' $k$-numerical range \cite{Halmos-book}:
\[
W_k(T)=\{\tr(TP):\ P\ \text{ is a projection of rank }k\}.
\]
If we write 
\[
\Lambda_1(T)=\{\lambda\in\CC:\ \text{ there exists a rank-one projection $P$ with } PTP=\lambda P\}
\]
we obtain as a generalization the \emph{higher rank $k$-numerical range} \cite{choi2006higherB}:
\begin{equation}\label{equation: definition of Lambda k}
\Lambda_k(T)=\{\lambda\in\CC:\ \text{ there exists a rank-$k$ projection $P$ with } PTP=\lambda P\},
\end{equation}
that we consider in this paper. 
For a given $T$, we have $\Lambda_1(T)\supset\Lambda_2(T)\supset\cdots$ and each $\Lambda_k(T)$ is compact and convex. This last fact---convexity---is not obvious and was proven independently by Woerdeman \cite{Woerdeman2008-Convex} and Li-Sze \cite{li2008canonical} by very different means. 

Higher-rank numerical ranges have been calculated explicitly in some cases, but the list is fairly limited. The higher numerical range is invariant under unitary conjugation and respects translations---that is, $\Lambda_k(T+\beta I)=\beta+\Lambda_k(T)$---which expands a bit on whatever examples one has. For normal $T$ it was conjectured in \cite{choi2006higherA} and proven in \cite{li2008canonical} that
\[
\Lambda_k(T)=\bigcap_{\Gamma\subset\{\lambda_1,\ldots,\lambda_n\},\ |\Gamma|=n-k+1}\conv\Gamma,
\]
where $\lambda_1,\ldots,\lambda_n$ are the eigenvalues of $T$. 

The first case where higher rank numerical ranges of non-normal operators were calculated explicitly is \cite{GaayaHigherRank2012}, where the author shows that $\Lambda_k(T)$ is either a disk or empty whenever the $n\times n$ matrix $T$ is a  power of a shift. In \cite{2018AdamAretakiSpitkovsky-Elliptical} the authors determine the higher rank numerical ranges of direct sums of the form $\lambda I\oplus A_1\oplus \cdots\oplus A_n$, where the matrices $A_j$ are $2\times 2$, all with the same diagonal; this allows them---via unitary equivalence---to determine the higher numerical ranges of certain 2-Toeplitz tridiagonal matrices. In the cases where the structure of the chain $\Lambda_1(T),\ldots,\Lambda_n(T)$ is determined explicitly, its structure is fairly simple, going from a fixed type of area (a disk in \cite{GaayaHigherRank2012} and an ellipse in \cite{2018AdamAretakiSpitkovsky-Elliptical}) to the empty set. By contrast, the higher rank numerical ranges we find have more variety, see \cref{theorem: Omegak of T alpha beta m}. 

As in the aforementioned works, the convexity proof by Li--Sze  gives us the tool that we use  to calculate $\Lambda_k$ in our examples (a method derived from Li--Sze's formula \eqref{equation: numerical range as convex} is considered in \cite{ChienNakazato-Boundary}, but it does not look like it could be effectively used in our case). Recall the following well-known characterization of the numerical range: if $\lambda_1(T)$ denotes the largest eigenvalue of $T$, then by focusing on the convexity of the numerical range it is possible to prove that 
\begin{equation}\label{equation: numerical range as convex}
\Lambda_1(T)=\{\mu:\ \re e^{i\t}\mu\leq\lambda_1(\re e^{i\t} T),\ 0\leq\t\leq2\pi\}
\end{equation}
(see \cite[Theorem 1.5.12]{roger1994topics}). 
What Li and Sze showed is that that the equality \eqref{equation: numerical range as convex} extends naturally to the generalization \eqref{equation: definition of Lambda k}. Namely, 
\begin{theorem}[\cite{li2008canonical}]\label{theorem: li2008canonical}
Let $T\in M_n(\CC)$, $k\in\{1,\ldots,n\}$. Then 
\[
\Lambda_k(T)=\{\mu:\ \re e^{i\t}\mu\leq\lambda_k(\re e^{i\t} T),\ 0\leq\t\leq2\pi\}.
\]
\end{theorem}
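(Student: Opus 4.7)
The plan is to establish the two inclusions of \cref{theorem: li2008canonical} separately, as they are of very different depths.

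For the elementary inclusion $\Lambda_k(T) \subseteq \{\mu : \re e^{i\t}\mu \leq \lambda_k(\re e^{i\t}T)\}$, take $\mu \in \Lambda_k(T)$ realised by a rank-$k$ projection $P$ with $PTP = \mu P$. Multiplying by $e^{i\t}$ and taking Hermitian parts of both sides yields
\[
P\,\re(e^{i\t}T)\,P = \re(e^{i\t}\mu)\,P.
\]
The left-hand side is a compression of the Hermitian matrix $\re(e^{i\t}T)$ that acts as a scalar on the range of $P$, so all $k$ of its eigenvalues equal $\re(e^{i\t}\mu)$. Cauchy interlacing then yields $\re(e^{i\t}\mu)\leq\lambda_k(\re(e^{i\t}T))$, and $\t$ was arbitrary.

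For the reverse inclusion, I would first use translation invariance ($\Lambda_k(T+cI)=c+\Lambda_k(T)$, and the same identity holds for the right-hand side of the theorem) to reduce to showing: if $\lambda_k(\re(e^{i\t}T))\geq 0$ for every $\t$, then there is a rank-$k$ subspace $V$ on which $\langle Tv,v\rangle=0$ for every $v\in V$. Polarising the sesquilinear form $(v,w)\mapsto\langle Tv,w\rangle$ shows that this quadratic condition on $V$ is equivalent to $P_V T P_V = 0$. As a warm-up I would handle the Hermitian case $T=H$, where the claim reduces to $[\lambda_{n-k+1}(H),\lambda_k(H)]\subseteq\Lambda_k(H)$ and can be settled by an explicit pairing construction: diagonalise $H$, pair each of the top $k$ eigenvectors with one of the bottom $k$ eigenvectors, and choose a unit combination whose quadratic value equals the prescribed $\mu$; the span of these $k$ combinations is the desired subspace.

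The main obstacle is then passing from this Hermitian model to general $T$, where one must produce a single $V$ that works simultaneously for $\re T$, $\im T$, and hence every $\re(e^{i\t}T)$. The approach I would try is a compactness and continuity argument on the Grassmannian $\mathrm{Gr}(k,n)$: starting from a subspace provided by the Hermitian construction applied to $\re T$ at level $0$, deform it inside $\mathrm{Gr}(k,n)$ so as to additionally annihilate the compression of $\im T$, while monitoring the compressions of $\re(e^{i\t}T)$ for intermediate $\t$. This coupling step---producing one subspace good for every direction at once, rather than a $\t$-dependent family---is the crux of the argument, and the place where the full strength of the $\lambda_k$ hypothesis (as opposed to its pointwise Hermitian consequences) is actually used.
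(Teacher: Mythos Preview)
The paper does not prove \cref{theorem: li2008canonical}; it is quoted verbatim from Li--Sze \cite{li2008canonical} and used as a black box. So there is no ``paper's own proof'' to compare against, only the original.

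Your easy inclusion is correct and is exactly the standard interlacing argument. The gap is entirely in the reverse inclusion. Your reduction to ``$\lambda_k(\re e^{i\t}T)\ge 0$ for all $\t$ implies $0\in\Lambda_k(T)$'' is fine, and the Hermitian warm-up is fine. But the last paragraph is not a proof: you describe \emph{wanting} to deform a subspace on $\mathrm{Gr}(k,n)$ so that the compression of $\im T$ also vanishes, without giving any mechanism that guarantees such a deformation exists or terminates at the target. Nothing in the hypothesis is invoked to control the deformation, and no topological invariant (degree, index, fixed-point argument) is identified that would force the endpoint to exist. As written this is a statement of hope, not an argument, and you yourself flag it as ``the crux'' left undone.

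For contrast, the actual Li--Sze proof does not proceed by deformation at all. After the same reduction to $\mu=0$, they recast the problem algebraically: finding a $k$-dimensional $V$ with $\langle Tv,v\rangle=0$ for all $v\in V$ amounts to finding a $k$-dimensional totally isotropic subspace for the complex \emph{symmetric} bilinear form obtained from $T$ (via the identification $v\mapsto \bar v$ on a suitable subspace), and they invoke the classical fact that an $n\times n$ complex symmetric form always admits a totally isotropic subspace of dimension $\lfloor n/2\rfloor$. The eigenvalue hypothesis is what lets them first pass to a $2k$-dimensional subspace on which the problem has this symmetric structure. If you want to complete your write-up, that algebraic route is the one to follow; a Grassmannian continuity argument, if it can be made to work at all, would need substantially more machinery than you have indicated.
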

This is very useful from a practical point of view, because the inequality $\re e^{i\t}\mu\leq\lambda_k(\re e^{i\t} T)$ describes a semi-plane in the complex plane, and one can sometimes plot or analyze the lines $\re e^{i\t}\mu=\lambda_k(\re e^{i\t} T)$ for each $\t$. 

The paper is organized as follows. In \cref{section: preliminaries} we develop some notation and discuss the sets that will arise in our description of higher rank numerical ranges. In \cref{section: calculations} we determine explicitly the higher rank numerical ranges of matrices of the form $J_n(\alpha)\oplus \beta I_m$. And in \cref{section: remarks} we consider some applications and relations with previous work. 

\section{Preliminaries}\label{section: preliminaries}

We begin by developing a bit of notation to express the sets that will arise as higher rank numerical ranges.

Our data consists of  $m,n\in\NN$ with $n\geq2$, $k\in\{1,\ldots,n+m\}$, and $\alpha,\beta\in\CC$. In terms of those numbers we will define angles $\phi_k,\psi_{k,m},\delta_k,\eta_{k,m}$, sets $D_k,C_{k,m}\subset\RR$ and $\widetilde D^{\vphantom B}_k, \widetilde C^{\vphantom B}_{k,m}\subset\CC$, and cones $R_{r,k}\subset\CC$ for some $r\geq0$.

Define
\[
\phi_k=\frac{k\pi}{n+1},\ \ \ \ \psi_{k,m}=\frac{(k-m)\pi}{n+1}. 
\]
The numbers $\cos\phi_k$ and $\cos\psi_{k,m}$ play an essential role in the statements and proofs to follow, so we encourage the reader to keep them in mind. In terms of these two numbers we define  two subsets of the real line, depending also on $\alpha,\beta$:  
\[
D_k=\{\t:\ |\beta-\alpha|\,\cos\t\leq\cos\phi_k\}
\]
and
\[
C_{k,m}=\begin{cases} \{\t:\ |\beta-\alpha|\,\cos\t>\cos\psi_{k,m}\},&\ k>m \\[0.3cm] \emptyset,&\ k\leq m
\end{cases}
\]
Note that we have  $-D_k=D_k$ and $-C_{k,m}=C_{k,m}$. These sets will only be relevant for $k\leq n/2$. When $n/2\geq k>m$ we have  $\psi_{k,m}<\phi_k<\pi$ and so $\cos\phi_k<\cos\psi_{k,m}$; from this it is clear that we always have $D_k\cap C_{k,m}=\emptyset$.

To characterize the sets $D_k$ and $C_{k,m}$ we will define two auxiliary angles, $\delta_k$ and $\eta_{k,m}$. First, let 
\[
\delta_k=\begin{cases} \arccos\left(\frac1{|\beta-\alpha|}\,\cos\phi_k\right),&\ |\beta-\alpha|\geq|\cos\phi_k| \ \text{ and } \beta\ne\alpha\\[0.3cm]  0,&\ \text{otherwise}
\end{cases}
\]

We remark that $0\leq\delta_k \leq\pi$, and that $\cos\phi_k\geq0$ if and only if $k\leq\tfrac{n+1}2$. 

\begin{lemma}\label{lemma: characterize Bk}
We have 
\[
D_k=
\begin{cases}
[\delta_k,2\pi-\delta_k]+2\pi\ZZ,&\ \delta_k> 0\\
[0,2\pi]+2\pi\ZZ,&\ \delta_k=0,\ k\leq \tfrac{n+1}2\\ 
\emptyset,&\ \delta_k=0,\ k>\tfrac{n+1}2
\end{cases}
\]
\end{lemma}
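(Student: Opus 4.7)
The plan is to analyze the defining inequality $|\beta-\alpha|\cos\t\leq\cos\phi_k$ directly, splitting into cases that match the piecewise definition of $\delta_k$. I would first record that $\phi_k=k\pi/(n+1)\in(0,\pi)$, so $\cos\phi_k\in(-1,1)$, and that $\cos\phi_k\geq0$ exactly when $k\leq(n+1)/2$; this turns the sign of $\cos\phi_k$ into the crucial switch in the degenerate cases.

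For the degenerate situation $\delta_k=0$, I would treat the two sub-cases from the definition together. Either $\beta=\alpha$, in which case the inequality reduces to $0\leq\cos\phi_k$, which holds for every $\t$ precisely when $k\leq(n+1)/2$ and fails for every $\t$ otherwise; or $\beta\neq\alpha$ and $|\beta-\alpha|<|\cos\phi_k|$, in which case $|\beta-\alpha|\cos\t$ is strictly between $-|\cos\phi_k|$ and $|\cos\phi_k|$ (so in particular strictly between $\cos\phi_k$ and $-\cos\phi_k$ when $\cos\phi_k<0$), and hence the inequality is universally true when $\cos\phi_k>0$ and universally false when $\cos\phi_k<0$. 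Combining, $D_k=[0,2\pi]+2\pi\ZZ$ when $k\leq(n+1)/2$ and $D_k=\emptyset$ when $k>(n+1)/2$, matching the two degenerate lines of the lemma.

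For the main case $\delta_k>0$, I would divide through by $|\beta-\alpha|>0$ and rewrite the inequality as $\cos\t\leq\cos\phi_k/|\beta-\alpha|=\cos\delta_k$. Since $\cos$ is strictly decreasing on $[0,\pi]$, even, and $2\pi$-periodic, the solutions of $\cos\t\leq\cos\delta_k$ inside $[0,2\pi]$ form exactly the interval $[\delta_k,2\pi-\delta_k]$, and translating by $2\pi\ZZ$ produces the claimed description.

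The argument is essentially routine, and the only delicate point is bookkeeping at the borderline $\cos\phi_k=0$, which occurs when $n$ is odd and $k=(n+1)/2$: here $|\beta-\alpha|\geq|\cos\phi_k|$ is automatic as soon as $\beta\neq\alpha$, one has $\delta_k=\arccos 0=\pi/2$, and the main-case argument correctly yields $D_k=[\pi/2,3\pi/2]+2\pi\ZZ$, which is the expected half-plane answer; this confirms that no case is double-counted or missed.
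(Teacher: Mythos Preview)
Your proposal is correct and follows essentially the same route as the paper: both arguments reduce the defining inequality to $\cos\t\leq\cos\delta_k$ when $\delta_k>0$, and handle $\delta_k=0$ by comparing $|\beta-\alpha|$ with $|\cos\phi_k|$ and splitting on the sign of $\cos\phi_k$. One tiny point to tighten: your dichotomy for $\delta_k=0$ (``either $\beta=\alpha$, or $\beta\neq\alpha$ and $|\beta-\alpha|<|\cos\phi_k|$'') misses the borderline $\beta\neq\alpha$, $|\beta-\alpha|=\cos\phi_k>0$, where the first branch of the definition gives $\delta_k=\arccos 1=0$; replacing your strict inequality by $|\beta-\alpha|\leq|\cos\phi_k|$ (as the paper does) covers this and your argument goes through unchanged.
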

\begin{proof}
Assume first that $\delta_k>0$; in particular, $\beta\ne\alpha$. If $\t\in [\delta_k,2\pi-\delta_k]$, we have $\cos\t\leq\cos\delta_k$. That is, 
\[
\cos\t\leq\tfrac1{|\beta-\alpha|}\,\cos\phi_k,
\]
and so $\t\in D_k$. Conversely, if $\t\in D_k$ we have $\cos\t\leq\tfrac1{|\beta-\alpha|}\,\cos\phi_k=\cos\delta_k$, so $\t\in [\delta_k,2\pi-\delta_k]$. Thus $D_k=[\delta_k,2\pi-\delta_k]$. 

When $\delta_k=0$, we have $|\beta-\alpha|\leq|\cos\phi_k|$. If $k\leq\tfrac{n+1}2$, we have $\cos\phi_k\geq0$; then for any $\t$ we have $|\beta-\alpha|\cos\t\leq|\beta-\alpha|\leq|\cos\phi_k|=\cos\phi_k$, so $D_k=[0,2\pi]$. And if $k>\tfrac{n+1}2$, now $\cos\phi_k<0$; then $|\beta-\alpha|\cos\t\leq\cos\phi_k<0$ is impossible,  giving us $D_k=\emptyset$. 
\end{proof}

Our second auxiliary angle is 
\[
\eta_{k,m}=\begin{cases} \arccos\left(\frac1{|\beta-\alpha|}\cos\psi_{k,m}\right),&\ k>m,\ \beta\ne\alpha,\ \text{ and } \ |\beta-\alpha|\geq|\cos\psi_{k,m}| 
\\[0.3cm] 
0,&\ \text{otherwise}
\end{cases}
\]

\begin{lemma}\label{lemma: characterize Ck}
We have 
\[
C_{k,m}=\begin{cases}
[0,\eta_{k,m})\cup(2\pi-\eta_{k,m},2\pi],&\ \eta_{k,m}>0 \\[0.3cm]
\emptyset,&\ \eta_{k,m}=0,\ k\leq m\\[0.3cm]
\emptyset,&\ \eta_{k,m}=0,\ k> m,\ \cos\psi_{k,m}>0\\[0.3cm]
[0,2\pi],&\ \eta_{k,m}=0,\ k>m,\ \cos\psi_{k,m}<0
\end{cases}
\]
\end{lemma}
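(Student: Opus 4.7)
The proof will closely mirror the one given for \cref{lemma: characterize Bk}, since the definitions of $D_k$ and $C_{k,m}$ differ only in the direction of the inequality, the substitution of $\psi_{k,m}$ for $\phi_k$, and the preliminary constraint $k>m$ (which is already built into $\eta_{k,m}$). The geometric intuition is that, because we now ask $|\beta-\alpha|\cos\t$ to be \emph{large} rather than \emph{small}, the solution set collects around $\t=0$ and $\t=2\pi$ rather than around $\t=\pi$; and because the inequality is strict, the arcs will be half-open at $\eta_{k,m}$.

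First I would dispatch the case $\eta_{k,m}>0$. By the definition of $\eta_{k,m}$, we are automatically in the regime $k>m$, $\beta\ne\alpha$, and $|\beta-\alpha|\geq|\cos\psi_{k,m}|$, so dividing by $|\beta-\alpha|$ is legitimate. Then $\t\in C_{k,m}$ iff $\cos\t>\frac1{|\beta-\alpha|}\cos\psi_{k,m}=\cos\eta_{k,m}$, and since $\eta_{k,m}\in[0,\pi]$, the usual monotonicity of cosine together with $2\pi$-periodicity identifies this set with $[0,\eta_{k,m})\cup(2\pi-\eta_{k,m},2\pi]$.

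For the remaining three cases $\eta_{k,m}=0$, I would split on which clause of the definition forces $\eta_{k,m}$ to vanish. The case $k\leq m$ is immediate from the definition of $C_{k,m}$. When $k>m$ and $\eta_{k,m}=0$, we must be in the ``otherwise'' branch, meaning either $\beta=\alpha$ or $|\beta-\alpha|<|\cos\psi_{k,m}|$; in both scenarios the quantity $|\beta-\alpha|\cos\t$ lies in the interval $[-|\cos\psi_{k,m}|,\,|\cos\psi_{k,m}|]$ strictly (or is identically $0$). If $\cos\psi_{k,m}>0$, then $|\beta-\alpha|\cos\t\leq|\beta-\alpha|<\cos\psi_{k,m}$ uniformly in $\t$, forcing $C_{k,m}=\emptyset$; if $\cos\psi_{k,m}<0$, then $|\beta-\alpha|\cos\t\geq-|\beta-\alpha|>\cos\psi_{k,m}$ for all $\t$, giving $C_{k,m}=[0,2\pi]$.

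I do not anticipate a genuine obstacle: the entire argument is a matter of carefully unwinding the definitions and running the same bookkeeping as in \cref{lemma: characterize Bk}. The only mildly delicate point is ensuring that the strict inequality in the definition of $C_{k,m}$ is tracked correctly through all the case splits, in particular that the endpoints $\t=\eta_{k,m}$ and $\t=2\pi-\eta_{k,m}$ are genuinely excluded in the first case.
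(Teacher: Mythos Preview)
Your proposal is correct and follows essentially the same route as the paper. One small imprecision: when $k>m$ and $\eta_{k,m}=0$ you are not forced into the ``otherwise'' branch, since the first branch also returns $0$ in the boundary case $\cos\psi_{k,m}=|\beta-\alpha|>0$; the paper covers this by deducing only the non-strict bound $|\beta-\alpha|\leq|\cos\psi_{k,m}|$, which is enough because membership in $C_{k,m}$ requires a strict inequality.
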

\begin{proof}
Consider first the case where $\eta_{k,m}>0$ (note that this includes the case $\cos\psi_{k,m}=0$). If $\t\in[0,\eta_{k,m})\cup(2\pi-\eta_{k,m},2\pi]$, we have $\cos\t>\cos\eta_{k,m}=\frac1{|\beta-\alpha|}\cos\psi_{k,m}$, so $\t\in C_{k,m}$. Conversely, if  $\t \in[\eta_{k,m},2\pi-\eta_{k,m}]$ we have  $\cos \t\leq \cos\eta_{k,m}=\frac1{|\beta-\alpha|}\cos\psi_{k,m}$, so $\t\not\in C_{k,m}$.

When $\eta_{k,m}=0$, we either have $k\leq m$, in which case $C_{k,m}=\emptyset$ by definition, or $k>m$. In this latter case we have $|\beta-\alpha|\leq|\cos\psi_{k,m}|$. If $\cos\psi_{k,m}>0$, then $|\beta-\alpha|\cos\t>\cos\psi_{k,m}$ is impossible, and so $C_{k,m}=\emptyset$; when $\cos\psi_{k,m}<0$, now $|\beta-\alpha|\cos\t\geq-|\beta-\alpha|\geq-|\cos\psi_{k,m}|=\cos\psi_{k,m}$. If the inequality is always strict, we have $C_{k,m}=[0,2\pi]$. Equality could only occur when $\cos\t=-1$ and $|\beta-\alpha|=-\cos\psi_{k,m}$; but this last equality, unless $\beta=\alpha$, implies $\eta_{k,m}=\pi$, contrary to our assumption of $\eta_{km}=0$. And if $\beta=\alpha$, $C_{k,m}=[0,2\pi]$ since $\cos\psi_{k,m}<0$. 
\end{proof}

Define, for each $k$, disjoint sets  $\widetilde D^{\vphantom B}_k, \widetilde E^{\vphantom B}_{k}\subset\CC$, with $\CC=\widetilde D^{\vphantom B}_k\cup \widetilde E^{\vphantom B}_{k}$, by 
\[
\widetilde D^{\vphantom B}_k=\{\mu\in\CC:\ \arg\mu\in D_k\},\ \ \ \widetilde E^{\vphantom B}_{k}=\{ \mu\in\CC:\ \arg\mu\not\in D_k\}.
\]
We will write $B_r(\lambda)$ for the \emph{closed} ball of radius $r$ centered at $\lambda$. We allow $r$ to be negative, in which case $B_r(\lambda)=\emptyset$. For $r\geq0$  denote  by $R_{r,k}$ the cone 
\[
R_{r,k}=\{\mu=x+iy\in\CC:\ x\leq r,\ \text{ and } \ (x-r)\cot\delta_k\leq y\leq (r-x)\cot\delta_k\}. 
\]

For a graphic description of these regions, we defer to \cref{remark: examples} and \cref{remark: new radius}.

\begin{lemma}\label{lemma: the cone}
Let $x,y\in\RR$, $r\geq0$. 
Assume that $0<\delta_k<\pi$.  
Then the following conditions are equivalent:
\begin{enumerate}
\item\label{lemma: the cone:1} $x\cos\t-y\sin\t\leq r\cos\t$ for all $\t\not\in D_k$;
\item\label{lemma: the cone:2} $x+iy\in R_{r,k}$; 
\item\label{lemma: the cone:3} $x\cos\delta_k\pm y\sin\delta_k\leq r\cos\delta_k$.
\end{enumerate}
\end{lemma}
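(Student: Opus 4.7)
The plan is to establish all three equivalences via a circular chain $(1) \Rightarrow (3) \Rightarrow (2) \Rightarrow (1)$. Each link is brief, but the three use complementary ideas: a continuity argument, algebraic rearrangement, and a case analysis by sign of $\sin\theta$.

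For $(1) \Rightarrow (3)$, I would invoke \cref{lemma: characterize Bk} to write the complement of $D_k$ in $[0,2\pi]$ as $[0,\delta_k) \cup (2\pi-\delta_k, 2\pi]$, whose closure contains the boundary points $\delta_k$ and $2\pi - \delta_k$. Since the expression $x\cos\theta - y\sin\theta - r\cos\theta$ is continuous in $\theta$, taking limits at these two boundary points, together with the identities $\cos(2\pi-\delta_k) = \cos\delta_k$ and $\sin(2\pi-\delta_k) = -\sin\delta_k$, delivers exactly the two inequalities in (3).

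For $(3) \Rightarrow (2)$, adding the two inequalities yields $2(x-r)\cos\delta_k \leq 0$. Dividing by $\cos\delta_k > 0$ (the relevant regime, since in the applications $\cos\phi_k > 0$ forces $\cos\delta_k > 0$) produces $x \leq r$. Each of the two inequalities in (3), divided through by $\sin\delta_k > 0$, rearranges into one of $y \leq (r-x)\cot\delta_k$ and $y \geq (x-r)\cot\delta_k$, matching the defining conditions of $R_{r,k}$.

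The closing direction $(2) \Rightarrow (1)$ is the main obstacle, since here the validity of the inequality at the two endpoints $\pm\delta_k$ has to be boosted to validity on the entire open arc in between. I would rewrite (1) as $(x-r)\cos\theta \leq y\sin\theta$ and split by the sign of $\sin\theta$. At $\theta = 0$ the inequality collapses to $x \leq r$, supplied directly by (2). For $\theta \in (0, \delta_k)$, dividing by $\sin\theta > 0$ reduces the claim to $y \geq (x-r)\cot\theta$; since $x - r \leq 0$ and $\cot$ is strictly decreasing on $(0,\pi/2)$, the right-hand side is maximized at $\theta = \delta_k$, where (2) provides $y \geq (x-r)\cot\delta_k$. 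The case $\theta \in (-\delta_k, 0)$ is symmetric: dividing by $\sin\theta < 0$ flips the inequality to $y \leq (x-r)\cot\theta$, whose minimum on this interval is attained at $\theta = -\delta_k$ and equals $(r-x)\cot\delta_k$, which is exactly the upper bound of $y$ supplied by (2). The only real subtlety lies in this sign bookkeeping and in the use of monotonicity of $\cot$ on the two halves of $(-\delta_k,\delta_k)\setminus\{0\}$; the other two implications reduce to continuity and a single line of algebra.
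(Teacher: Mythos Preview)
Your chain $(1)\Rightarrow(3)\Rightarrow(2)\Rightarrow(1)$ is essentially the paper's argument run in the opposite direction: the paper proves $(1)\Rightarrow(2)\Rightarrow(3)\Rightarrow(1)$, using the same monotonicity-of-$\cot$ manipulation that you use for $(2)\Rightarrow(1)$ in its $(1)\Rightarrow(2)$ step, and the same algebraic rearrangement (divide by $\sin\delta_k$) that you use for $(3)\Rightarrow(2)$ in its $(2)\Rightarrow(3)$. Your continuity argument for $(1)\Rightarrow(3)$ is a mild shortcut compared to the paper's $(3)\Rightarrow(1)$, which redoes the cotangent inequalities at $\theta=\pm\delta_k$ and extends them by monotonicity.

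One substantive point: in $(3)\Rightarrow(2)$ you explicitly assume $\cos\delta_k>0$ (equivalently $\delta_k<\pi/2$), whereas the lemma hypothesizes only $0<\delta_k<\pi$. You are right that the applications enforce this, but as a proof of the lemma \emph{as stated} it is incomplete. In fact the equivalence with~(3) genuinely fails once $\delta_k\geq\pi/2$: at $\delta_k=\pi/2$, condition~(3) reduces to $\pm y\sin\delta_k\leq0$, i.e.\ $y=0$, with no constraint on $x$, whereas $R_{r,k}$ also requires $x\leq r$; so $(x,y)=(r+1,0)$ satisfies~(3) but not~(2) or~(1). The paper's own proof of $(3)\Rightarrow(1)$ has the same gap---its deduction of $x-r\leq0$ from $(x-r)\cot\delta_k\leq\pm y$ tacitly needs $\cot\delta_k>0$. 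So your restriction is not a defect relative to the paper; if anything you have flagged an imprecision in the lemma's stated range for $\delta_k$. (Minor: in $(2)\Rightarrow(1)$ you invoke monotonicity of $\cot$ on $(0,\pi/2)$; the paper uses $(0,\pi)$, which is what would be needed to cover the full hypothesis, though within your declared regime $\delta_k<\pi/2$ the distinction is moot.)
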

\begin{proof}
\eqref{lemma: the cone:1}$\implies$\eqref{lemma: the cone:2} Since we only consider $\t\not\in D_k$ and $\delta_k>0$, by \cref{lemma: characterize Bk} we may assume that $-\delta_k<\t<\delta_k$. Assume first that $0\leq \t<\delta_k$, so that $\sin\t\geq0$. The case $\t=0$ (we   have $0\not\in D_k$ by the hypothesis $\delta_k>0$), gives us $x\leq r$. When  $\t\ne 0$, dividing the inequality $x\cos\t-y\sin\t\leq r\cos\t$ by $\sin\t$, we get $x\cot\t-y\leq r\cot\t$, which we rewrite as 
\begin{equation}\label{equation: inequality left}
(x-r)\cot\t\leq y,\ \ \ 0\leq\t<\delta_k.
\end{equation}
When $-\delta_k<\t<0$, we have $\sin\t<0$ and when we divide $x\cos\t-y\sin\t\leq r\cos\t$ by $\sin\t$, we get $x\cot\t-y\geq r\cot\t$, so 
\begin{equation}\label{equation: inequality right}
y\leq (x-r)\cot\t,\ \ \ -\delta_k<\t<0. 
\end{equation}
Replacing $\t$ with $-\t$ and using that $\cot(-\t)=-\cot\t$, the inequality \eqref{equation: inequality right} becomes
\begin{equation}\label{equation: inequality right2}
y\leq (r-x)\cot\t,\ \ \ 0<\t<\delta_k. 
\end{equation}
Combining \eqref{equation: inequality left} and \eqref{equation: inequality right2}, 
\begin{equation}\label{equation: inequality leftright}
(x-r)\cot\t\leq y\leq (r-x)\cot\t,\ \ \ 0<\t<\delta_k. 
\end{equation}
As the cotangent is decreasing on $(0,\pi)$, we have $\cot\delta_k\leq\cot\t$ if $0<\t<\delta_k$. From $(x-r)\leq0$, we obtain $(x-r)\cot\t\leq(x-r)\cot\delta_k$; since $y$ is at least as big as  $(x-r)\cot\t$ for all $\t\in(0,\delta_k)$, we get that $(x-r)\cot\delta_k\leq y$. Similarly, we have $(r-x)\cot\delta_k\leq(r-x)\cot\t$ for all $\t\in(0,\delta_k)$, so 
\[
(x-r)\cot\delta_k\leq y\leq(r-x)\cot\delta_k.
\]

\eqref{lemma: the cone:2}$\implies$\eqref{lemma: the cone:3} We may rewrite the inequality $(x-r)\cot\delta_k\leq y\leq(r-x)\cot\delta_k$ as the two inequalities
\[
x\cot\delta_k-y\leq r\cot\delta_k,\ \ \ x\cot\delta_k+y\leq r\cot\delta_k. 
\] 
Since $\sin\delta_k>0$   we can multiply by $\sin\delta_k $ to get 
\[
x\cos\delta_k-y\sin\delta_k\leq r\cos\delta_k,\ \ \ x\cos\delta_k+y\sin\delta_k\leq r\cos\delta_k. 
\] 

\eqref{lemma: the cone:3}$\implies$\eqref{lemma: the cone:1} We have $\sin\delta_k>0$ by hypothesis. Dividing by $\sin\delta_k$ we get 
\[
x\cot\delta_k\pm y\leq r\cot\delta_k,
\]
or
\begin{equation}\label{equation: lemma the cone 4}
(x-r)\cot\delta_k\leq\pm y. 
\end{equation}
It follows that $x-r$ is less than or equal both a non-negative and a non-positive number, so $x-r\leq0$. Now rewrite \eqref{equation: lemma the cone 4} as 
\[
(x-r)\cot\delta_k\leq y\leq(r-x)\cot\delta_k.
\]
If $0<\t<\delta_k$, using that the cotangent is decreasing and that $x-r\leq0$ we obtain 
\begin{equation}\label{equation: undoing cotangent 1}
(x-r)\cot\t\leq(x-r)\cot\delta_k\leq y,
\end{equation}
which we may write as $x\cos\t-y\sin\t\leq r\cos\t$ (since $\sin\t>0$). Similarly, when $-\delta_k<\t<0$, we have $(r-x)\cot\t<(r-x)\cot(-\delta_k)=-(r-x)\cot\delta_k$. Thus
\begin{equation}\label{equation: undoing cotangent 2}
y\leq(r-x)\cot\delta_k<-(r-x)\cot\t=(x-r)\cot\t,
\end{equation}
which is (after multiplying by $\sin\t$, which is negative) $x\cos\t-y\sin\t\leq r\cos\t$. Thus
\[
x\cos\t-y\sin\t\leq r\cos\t,\ \ \ \t\not\in D_k. \qedhere
\]
\end{proof}

\section{Matrices of the form $J_n(\alpha)\oplus \beta I_m$} \label{section: calculations}

As before, our data is  $m,n\in\NN$ with $n\geq2$, $k\in\{1,\ldots,m+n\}$, $\alpha,\beta\in\CC$. We denote by $J_n(\alpha)$ the $n\times n$ Jordan block with eigenvalue $\alpha$. Our goal is to calculate $\Lambda_k(J_n(\alpha)\oplus \beta I_m)$. For any $T\in M_n(\CC)$, we will denote by $\lambda_1(T),\ldots,\lambda_n(T)$ its eigenvalues in non-increasing order, counting multiplicities. 

Consider $T=J_n(\alpha)\oplus \beta I_m\in M_{n+m}(\CC)$. Let $\psi=\arg(\beta-\alpha)$. Then 
\[
T=\alpha I_{n+m}+e^{i\psi}T^0_{\alpha,\beta},\ \ \ \ \text{ where }\ \  T^0_{\alpha,\beta}=e^{-i\psi} J_n(0)\oplus |\beta-\alpha|\,I_m.
\]
By considering $T^0_{\alpha,\beta}$ we are translating and  rotating $T$ so that the eigenvalue of the Jordan block is zero, and the eigenvalue of the scalar part is real and non-negative. Because translations and rotations apply trivially to the higher-rank numerical range, we will analyze the operators $T^0_{\alpha,\beta}$. 

Our goal is to apply \cref{theorem: li2008canonical}, so we need to calculate $\lambda_k(\re e^{i\t}\,T^0_{\alpha,\beta})$.

\subsection{The case $k\leq n$}

\begin{lemma}\label{lemma: lambda_k. k less than n. m}
Let $T^0_{\alpha,\beta}=e^{-i\psi}J_n(0)\oplus |\beta-\alpha|I_m$, and $k\in\{1,\ldots,n\}$. Then 
\[
\lambda_k(\re e^{i\t}\,T^0_{\alpha,\beta})=\begin{cases}\cos\psi_{k,m},&\ \t\in C_{k,m}\\ |\beta-\alpha|\cos \t,&\ \t\in [0,2\pi]\setminus(D_k\cup C_{k,m})\\ \cos \phi_k,&\ \t\in D_k \end{cases}
\]
\end{lemma}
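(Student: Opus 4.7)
The plan is to decompose $\re(e^{i\theta}T^0_{\alpha,\beta})$ as a direct sum of two Hermitian blocks and identify the spectrum of each, then sort.

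Writing $\phi=\theta-\psi$, the block decomposition is
\[
\re(e^{i\theta}T^0_{\alpha,\beta}) = \re(e^{i\phi}J_n(0)) \,\oplus\, |\beta-\alpha|\cos\theta\, I_m.
\]
The second summand obviously contributes the eigenvalue $|\beta-\alpha|\cos\theta$ with multiplicity $m$. For the first summand I would conjugate by the diagonal unitary $U=\mathrm{diag}(1,e^{-i\phi},\ldots,e^{-i(n-1)\phi})$; a short entrywise check shows $U^*\re(e^{i\phi}J_n(0))\,U = \tfrac{1}{2}(J_n(0)+J_n(0)^*)$, the standard real symmetric tridiagonal matrix with $0$ on the diagonal and $\tfrac{1}{2}$ on each off-diagonal. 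It is classical that this matrix has eigenvalues $\cos\frac{j\pi}{n+1}=\cos\phi_j$ for $j=1,\ldots,n$, each simple; in particular they are strictly decreasing in $j$ since $0<\phi_1<\cdots<\phi_n<\pi$.

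Combining the two blocks, the spectrum of $\re(e^{i\theta}T^0_{\alpha,\beta})$ with multiplicity is $\{\cos\phi_1,\ldots,\cos\phi_n\}$ together with $m$ copies of $|\beta-\alpha|\cos\theta$. To read off $\lambda_k$ I just need to locate $|\beta-\alpha|\cos\theta$ inside the strictly decreasing sequence $\cos\phi_1>\cos\phi_2>\cdots>\cos\phi_n$. Let $j(\theta)$ be the number of indices $i\in\{1,\ldots,n\}$ with $\cos\phi_i>|\beta-\alpha|\cos\theta$; then the sorted list of eigenvalues is
\[
\cos\phi_1,\ldots,\cos\phi_{j(\theta)},\;\underbrace{|\beta-\alpha|\cos\theta,\ldots,|\beta-\alpha|\cos\theta}_{m\text{ copies}},\;\cos\phi_{j(\theta)+1},\ldots,\cos\phi_n.
\]
Reading position $k$ gives: $\lambda_k=\cos\phi_k$ exactly when $k\le j(\theta)$, which is equivalent to $|\beta-\alpha|\cos\theta\le\cos\phi_k$, i.e.\ $\theta\in D_k$; when $k>m$, $\lambda_k=\cos\phi_{k-m}=\cos\psi_{k,m}$ exactly when $k>j(\theta)+m$, which is equivalent to $|\beta-\alpha|\cos\theta\ge\cos\psi_{k,m}$; on the complement $\lambda_k=|\beta-\alpha|\cos\theta$.

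The main obstacle, aside from the short unitary-equivalence computation, is harmonizing the boundary conventions: the set $D_k$ is defined with a non-strict inequality while $C_{k,m}$ uses a strict inequality. On the common boundary $|\beta-\alpha|\cos\theta=\cos\psi_{k,m}$ the middle formula $|\beta-\alpha|\cos\theta$ coincides with the bottom formula $\cos\psi_{k,m}$, so including that edge in the middle case (and hence excluding it from $C_{k,m}$) is consistent; similarly, the boundary $|\beta-\alpha|\cos\theta=\cos\phi_k$ belongs to $D_k$ and $\cos\phi_k=|\beta-\alpha|\cos\theta$ there. Once this bookkeeping is checked, together with the disjointness $D_k\cap C_{k,m}=\emptyset$ noted in \cref{section: preliminaries}, the three cases partition $[0,2\pi]$ and the formula follows.
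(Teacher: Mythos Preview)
Your proposal is correct and follows essentially the same approach as the paper: both compute the block spectra via the diagonal-unitary trick reducing $\re(e^{i\phi}J_n(0))$ to $\re(J_n(0))$, then locate the $k^{\rm th}$ entry of the merged sorted list. Your use of the counting function $j(\theta)$ compresses what the paper does as a two-case discussion ($k\le m$ versus $m<k\le n$), but the content is identical; just note that your stated equivalences ``$k\le j(\theta)\iff |\beta-\alpha|\cos\theta\le\cos\phi_k$'' and ``$k>j(\theta)+m\iff |\beta-\alpha|\cos\theta\ge\cos\psi_{k,m}$'' are literally off by the boundary (they hold with strict and non-strict reversed), which is exactly what your final paragraph repairs.
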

\begin{proof}
Since $T^0_{\alpha,\beta}$ is a block-diagonal sum of two matrices, its eigenvalues will be the union of the eigenvalues of each block. The only eigenvalue of 
\[
\re(e^{i\t}|\beta-\alpha| I_m)=|\beta-\alpha|\,\cos\t\,I_m
\]
is $|\beta-\alpha|\cos\t$, with multiplicity $m$. For $\re(e^{i\t} e^{-i\psi}J_n(0))=\re(e^{i(\t-\psi)}J_n(0))$, since unitary conjugation preserves the eigenvalues, we can apply the following well-known trick (it appears in \cite{haagerup-delaharpe1992}, though it was likely known before). Write $J_n(0)=\sum_{k=1}^{n-1}E_{k,k+1}$. Then 
\[
2\re(e^{i(\t-\psi)}J_n(0))=\sum_{k=1}^{n-1}e^{i(\t-\psi)}E_{k,k+1}+e^{-i(\t-\psi)}E_{k+1,k}. 
\]
Now we conjugate with the diagonal unitary $\sum_{j=1}^n e^{ij(\t-\psi)}E_{jj}$: 
\begin{align*}
\sum_{j=1}^n &e^{ij(\t-\psi)}E_{jj}\left(\sum_{k=1}^{n-1}e^{i(\t-\psi)}E_{k,k+1}+e^{-i(\t-\psi)}E_{k+1,k}\right)\sum_{j=1}^n e^{-ij(\t-\psi)}E_{jj}\\ 
&
=\sum_{k=1}^{n-1}e^{ik(\t-\psi)} e^{i(\t-\psi)}e^{-i(k+1)(\t-\psi)} E_{k,k+1}+e^{i(k+1)(\t-\psi)} e^{-i(\t-\psi)}e^{-ik(\t-\psi)} E_{k+1,k}\\
&=\sum_{k=1}^{n-1}  E_{k,k+1}+  E_{k+1,k}=J_n(0)+J_n(0)^*=2\re(J_n(0)). 
\end{align*}
Thus the eigenvalues of $\re(e^{i(\t-\psi)}J_n(0))$ are the same as those of $\re(J_n(0))$, and these are well-known to be $\{\cos \frac{j\pi}{n+1}:\ j=1,\ldots,n\}$; this can be seen by working explicitly with the eigenvectors 
\[
\xi_k=\left(\sin \frac{k\pi}{n+1},\sin \frac{2k\pi}{n+1},\ldots,\sin \frac{nk\pi}{n+1}\right).
\]
The above calculation is mentioned explicitly in \cite{haagerup-delaharpe1992}, where they mention that it was known to Lagrange. The eigenvalues indeed appear in \cite[Page 76]{de1759recherches}, although his argument does not seem to be as clear as Haagerup--de La Harpe's. 

Now we know that the eigenvalues of $\re (e^{i\t}\,T^0_{\alpha,\beta})$ are $|\beta-\alpha|\cos\t$ ($m$ times) and $\{\cos \frac{j\pi}{n+1}:\ j=1,\ldots,n\}$. These last $n$ are already in non-increasing order. Remember that our goal is to find the $k^{\rm th}$ entry in the list. 

Consider first the case $k\leq m$, where $C_{k,m}=\emptyset$. If $\t\not\in D_k$ then $|\beta-\alpha|\cos\t>\cos\phi_k$; this implies that the $m$ instances of  $|\beta-\alpha|\cos\t$ appear in the (ordered) list of eigenvalues of $\re (e^{i\t}\,T^0_{\alpha,\beta})$ at most after $\cos \frac{(k-1)\pi}{n+1}$. As $k\leq m$, the $k^{\rm th}$ largest eigenvalue is then $|\beta-\alpha|\cos\t$. When $\t\in D_k$, we now have $|\beta-\alpha|\cos\t\leq\cos\phi_k$, so the first $k$ elements in the ordered list of eigenvalues are $\{\cos \frac{j\pi}{n+1}:\ j=1,\ldots,k\}$. Thus the $k^{\rm th}$ eigenvalue is $\cos\phi_k$. 

When $m<k\leq n$, the situation is a bit different, since now $C_{k,m}\ne\emptyset$. When $\t\in C_{k,m}$, we have $|\beta-\alpha|\cos\t>\cos\psi_{k,m}$. 
So the $m$ elements $|\beta-\alpha|\cos\t$ appear, in the list of eigenvalues,  before $\cos\psi_{k,m}$;  the list of eigenvalues looks like
\[
\cos\tfrac{\pi}{n+1},\ldots,\cos\tfrac{j\pi}{n+1},
\overbrace{|\beta-\alpha|\cos\t,\ldots,|\beta-\alpha|\cos\t}^{m\ \text{ times}},
\cos\tfrac{(j+1)\pi}{n+1},\ldots,\cos\tfrac{(k-m)\pi}{n+1},\ldots
\]
As the $m$ equal entries will always appear before $\cos\frac{(k-m)\pi}{n+1}$,  the $k^{\rm th}$ eigenvalue is $\cos\frac{(k-m)\pi}{n+1}=\cos\psi_{k,m}$. When $\t\in [0,2\pi]\setminus(D_k\cup C_{k,m})$, the $m$ eigenvalues $|\beta-\alpha|\cos\t$ sit somewhere between $\cos\frac{(k-m)\pi}{n+1}$ and $\cos \frac{k\pi}{n+1}$. Since there are at most $k-1$ elements of the form $\cos \frac{j\pi}{n+1}$ above the $m$ elements $|\beta-\alpha|\cos\t$ in the list, now the $k^{\rm th}$ eigenvalue is $|\beta-\alpha|\cos\t$. Finally, when $\t\in D_k$, the first $k$ eigenvalues in the list are $\cos \frac{j\pi}{n+1}$, $j=1,\ldots,k$, so the $k^{\rm th}$ element in the list is $\cos \frac{k\pi}{n+1}=\cos\phi_k$. 
\end{proof}

\begin{proposition}\label{theorem: Lambda_k of Jordan plus I m}
Let $T^0_{\alpha,\beta}=e^{-i\psi}J_n(0)\oplus |\beta-\alpha|I_m$, and $k\leq n$. Then   
\[
\Lambda_k(T^0_{\alpha,\beta})=\left(\vphantom{R_\beta^\beta}\widetilde D^{\vphantom B}_k\cap B_{\cos\phi_k}(0)\right)
\cup
\left(\vphantom{R_\beta^\beta}\widetilde E^{\vphantom B}_{k}\cap R_{|\beta-\alpha|,k}\cap X\right),
\]
where 
\[
X=
\begin{cases}
\CC,&\ \text{ if }\ k\leq m\ \text{ or }\ C_{k,m}=\emptyset\\[0.3cm] 
B_{\cos\psi_{k,m}}(0),&\ \text{ if }\ k>m,\ C_{k,m}\ne\emptyset
\end{cases} 
\]
\end{proposition}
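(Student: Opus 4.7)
The plan is to apply \cref{theorem: li2008canonical} together with the computation of $\lambda_k(\re e^{i\t}T^0_{\alpha,\beta})$ in \cref{lemma: lambda_k. k less than n. m} to express $\Lambda_k(T^0_{\alpha,\beta})$ as an intersection of half-planes, and then to reorganize that family into the asserted union. Explicitly, $\mu\in\Lambda_k$ iff three families of inequalities hold: (A) $\re(e^{i\t}\mu)\le\cos\phi_k$ on $D_k$; (B) $\re(e^{i\t}\mu)\le|\beta-\alpha|\cos\t$ on $[0,2\pi]\setminus(D_k\cup C_{k,m})$; and (C) $\re(e^{i\t}\mu)\le\cos\psi_{k,m}$ on $C_{k,m}$.

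First I would merge (B) and (C). Since $\cos\psi_{k,m}<|\beta-\alpha|\cos\t$ on $C_{k,m}$ by the very definition of that set, and since $C_{k,m}\cap D_k=\emptyset$, the system (B)+(C) is equivalent to (C) together with $\re(e^{i\t}\mu)\le|\beta-\alpha|\cos\t$ for \emph{all} $\t\not\in D_k$; by \cref{lemma: the cone} this last condition is precisely $\mu\in R_{|\beta-\alpha|,k}$. So I have to describe the set of $\mu$ satisfying (A), $\mu\in R_{|\beta-\alpha|,k}$, and (C).

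I would then split on whether $\arg\mu\in D_k$. For $\mu\in\widetilde D^{\vphantom B}_k$ (the easy case), the symmetry $-D_k=D_k$ lets me plug $\t=-\arg\mu\in D_k$ into (A), yielding $|\mu|\le\cos\phi_k$; conversely, the chain $|\mu|\le\cos\phi_k\le\cos\psi_{k,m}\le|\beta-\alpha|\cos\t$ (valid on $[0,2\pi]\setminus D_k$) forces all three families, so $\Lambda_k\cap\widetilde D^{\vphantom B}_k=\widetilde D^{\vphantom B}_k\cap B_{\cos\phi_k}(0)$. For $\mu\in\widetilde E^{\vphantom B}_{k}$ a direct computation gives $\sup_{\t\in D_k}\re(e^{i\t}\mu)=|\mu|\cos(\delta_k-|\arg\mu|)$, so (A) rewrites as $x\cos\delta_k+|y|\sin\delta_k\le|\beta-\alpha|\cos\delta_k$, which by \cref{lemma: the cone}(3) is exactly $\mu\in R_{|\beta-\alpha|,k}$. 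An analogous supremum computation over $C_{k,m}$ shows that (C) amounts to $|\mu|\le\cos\psi_{k,m}$ when $|\arg\mu|\le\eta_{k,m}$, but only to the weaker $|\mu|\cos(|\arg\mu|-\eta_{k,m})\le\cos\psi_{k,m}$ when $\eta_{k,m}<|\arg\mu|<\delta_k$.

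The hard part will be this last subcase: I must show that in it the membership $\mu\in R_{|\beta-\alpha|,k}$ alone already forces $|\mu|\le\cos\psi_{k,m}$, so that both (C) and $\mu\in X$ become automatic, and the characterization $R_{|\beta-\alpha|,k}\cap X$ is indeed exact. This reduces to the trigonometric inequality
\[
\cos\delta_k\le\cos\eta_{k,m}\cos(\delta_k-|\arg\mu|)\quad\text{on}\quad[\eta_{k,m},\delta_k],
\]
which at the left endpoint collapses via a product-to-sum identity to $\sin\eta_{k,m}\sin(\delta_k-\eta_{k,m})\ge 0$ and is then non-decreasing in $|\arg\mu|$, since $\cos\eta_{k,m}\ge 0$ whenever this subcase is nonempty. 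The degenerate situations---$\delta_k\ge\pi/2$, where $R_{|\beta-\alpha|,k}$ collapses to $\{(|\beta-\alpha|,0)\}$ or to the segment $\{(x,0):x\le|\beta-\alpha|\}$ and so $|\arg\mu|=0$ on the cone; $D_k=\emptyset$ or $C_{k,m}=\emptyset$; and the cases where $B_{\cos\phi_k}(0)$ or $X$ is empty---need to be verified separately but follow routinely from \cref{lemma: characterize Bk}, \cref{lemma: characterize Ck}, and the convention $B_r=\emptyset$ for $r<0$.
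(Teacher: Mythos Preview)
Your proposal is correct and follows the same strategy as the paper: apply \cref{theorem: li2008canonical} together with \cref{lemma: lambda_k. k less than n. m}, split according to whether $\arg\mu\in D_k$, and invoke \cref{lemma: the cone} for the cone characterization on $\widetilde E^{\vphantom B}_{k}$. The one spot where the paper is more economical is the subcase $k>m$ with $\eta_{k,m}<|\arg\mu|<\delta_k$: instead of proving the trigonometric inequality $\cos\delta_k\le\cos\eta_{k,m}\cos(\delta_k-|\arg\mu|)$, the paper simply evaluates the half-plane condition at $\t=-\arg\mu$ (which lies outside $D_k\cup C_{k,m}$, so $\lambda_k=|\beta-\alpha|\cos\t$ there) and reads off $|\mu|\le|\beta-\alpha|\cos|\arg\mu|\le|\beta-\alpha|\cos\eta_{k,m}=\cos\psi_{k,m}$ in one line---a shortcut available to you as well, since your merged family of inequalities on $[0,2\pi]\setminus D_k$ already contains that very inequality. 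One small slip to fix: the chain $|\mu|\le\cos\phi_k\le\cos\psi_{k,m}\le|\beta-\alpha|\cos\t$ is valid only on $C_{k,m}$, not on all of $[0,2\pi]\setminus D_k$; for $\t$ in the middle region you need (and have) just $|\mu|\le\cos\phi_k<|\beta-\alpha|\cos\t$.
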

\begin{proof}

We consider first the case $k\leq m$ or $C_{k,m}=\emptyset$; in both cases we have $C_{k,m}=\emptyset$. Throughout the proof, we will use \cref{theorem: li2008canonical} and \cref{lemma: lambda_k. k less than n. m} repeatedly. 

Suppose first that $\mu\in\Lambda_k(T^0_{\alpha,\beta})$. We write $\mu=|\mu|e^{i\xi}=x+iy$. We split in two complementary cases:
\begin{itemize}
\item $\mu\in \widetilde D^{\vphantom B}_k$. So $\xi=\arg\mu\in D_k$. We have, for all $\t\in D_k$, 
\[
|\mu|\,\cos(\xi+\t)=\re e^{i\t}\mu\leq\lambda_k(\re e^{i\t} T^0_{\alpha,\beta})=\cos\phi_k,\ \ \ \t\in D_k.
\]
As $D_k=-D_k$, we have that $-\xi\in D_k$, so 
\[
|\mu|=|\mu|\,\cos(\xi-\xi)\leq\cos\phi_k.
\]
That is,  $\mu\in B_{\cos\phi_k}(0)$. 

\item $\mu\in \widetilde E^{\vphantom B}_{k}$. 
For all $\t\not\in D_k$,
\[
x\cos\t-y\sin\t=\re e^{i\t}\mu\leq\lambda_k(\re e^{i\t} T^0_{\alpha,\beta})=|\beta-\alpha|\,\cos\t.
\]
By \cref{lemma: the cone}, $\mu=x+iy\in R_{|\beta-\alpha|,k}$ and thus $\mu\in \widetilde E^{\vphantom B}_{k}\cap R_{|\beta-\alpha|,k}$. 

\end{itemize}
\medskip

Now, for the converse, we also consider two complementary cases:
\begin{itemize}
\item $\mu\in \widetilde D^{\vphantom B}_k\cap B_{\cos\phi_k}(0)$. We have $\xi=\arg\mu\in D_k$ and $|\mu|\leq\cos\phi_k$. Then, for every $\t\in D_k$, 
\begin{equation}\label{equation: theta in B}
\re e^{i\t}\mu=|\mu|\cos(\xi+\t)\leq|\mu|\leq\cos\phi_k=\lambda_k(\re e^{i\t} T^0_{\alpha,\beta});
\end{equation}
and, for $\t\not\in D_k$,  
\begin{equation}\label{equation: theta not in B}
\re e^{i\t}\mu\leq|\mu|\leq\cos\phi_k<|\beta-\alpha|\,\cos\t=\lambda_k(\re e^{i\t} T^0_{\alpha,\beta}).
\end{equation}
Now \eqref{equation: theta in B} and \eqref{equation: theta not in B} together imply that $\mu\in\Lambda_k(T^0_{\alpha,\beta})$. 

\medskip

\item $\mu\in \widetilde E^{\vphantom B}_{k}\cap R_{|\beta-\alpha|,k}$. So $\xi=\arg\mu\not\in D_k$. For any $\t\not\in D_k$, and using \cref{lemma: the cone}, 
\[
\re(e^{i\t}\mu)=x\cos\t-y\sin\t\leq|\beta-\alpha|\,\cos\t=\lambda_k(\re e^{i\t} T^0_{\alpha,\beta}). 
\]
When $\t\in D_k$, by \cref{lemma: characterize Bk} the distance between $\t$ and $\xi$ is minimized at $\delta_k$ (if $0\leq\xi\leq \pi$), or at $-\delta_k$ (if $\pi<\xi<2\pi$). Thus, using again \cref{lemma: the cone},
\begin{align*}
\re(e^{i\t}\mu)&=|\mu|\cos(\xi+\t)\leq|\mu|\cos(\xi\pm\delta_k)
=x\cos\delta_k\mp y\cos\delta_k\\
&\leq |\beta-\alpha|\,\cos\delta_k
=\cos\phi_k=\lambda_k(\re e^{i\t} T^0_{\alpha,\beta}).
\end{align*}
So $\mu\in\Lambda_k(T^0_{\alpha,\beta})$. 
\end{itemize}

When $k>m$, the above proof still applies, with the only exception of the case where  $\mu\in\Lambda_k(T^0_{\alpha,\beta})$ and $\mu\in \widetilde E^{\vphantom B}_{k}$---that is, the second bullet above. We still get that $\mu\in R_{|\beta-\alpha|,k}$, but now we can consider whether $\xi\in C_{k,m}$ or not. Recall that $\xi\not\in D_k$ since $\mu\in\widetilde E^{\vphantom B}_{k}$. If $\xi\in C_{k,m}$, then we also have $-\xi\in C_{k,m}$. Then
\[
|\mu|=|\mu|\cos(\xi-\xi)=\re(e^{-i\xi}\mu)\leq\lambda_k(\re e^{-i\xi}T^0_{\alpha,\beta})=\cos\psi_{k,m}. 
\]
When $\xi\not\in (D_k\cup C_{k,m})$, we have $\eta_{k,m}\leq\xi\leq\delta_k$ or $2\pi-\delta_k\leq\xi\leq2\pi-\eta_{k,m}$ (\cref{lemma: characterize Bk,lemma: characterize Ck}). Then 
\begin{align*}
|\mu|&=|\mu|\cos(\xi-\xi)=\re(e^{-i\xi}\mu)\leq\lambda_k(\re e^{-i\xi}T^0_{\alpha,\beta})\\ 
&=|\beta-\alpha|\cos\xi\leq|\beta-\alpha|\cos\eta_{k,m}=\cos\psi_{k,m}. 
\end{align*}
So in both cases $\mu\in B_{\cos\psi_{k,m}}(0)$ and we are done. 
\end{proof}

Now we can gather some insight on the shape of $\Lambda_k(T^0_{\alpha,\beta})$ when $k\leq n/2$ (the case $k>n/2$ is always somewhat degenerate, as we will see). When $k\leq m$, the convex set $\Lambda_k(T^0_{\alpha,\beta})$ is the union of two sets:  $\widetilde D^{\vphantom B}_k\cap B_{\cos\phi_k}(0)$ and $\widetilde E^{\vphantom B}_{k}\cap R_{|\beta-\alpha|,k}$. The former is a circular sector, while the latter is the intersection of two cones. We refer the reader to Figures \ref{n4i4k1} and \ref{n4i4k2} to visualize the shape. What is not obvious from the description in \cref{theorem: Lambda_k of Jordan plus I m} is how the two regions are joined. It turns out that the edges coming from the corner point $|\beta-\alpha|$ (or $\beta$ in the general case) are tangent to the disk $B_{\cos\phi_k}$ precisely at the point where they intersect the edges of $\widetilde E^{\vphantom B}_{k}$. That is what we prove in the next two propositions. 

\begin{proposition}\label{proposition: understanding the shape 2}
If $k\leq n/2$ and $|\beta-\alpha|\leq\cos\phi_k$, then $\Lambda_k(T^0_{\alpha,\beta})=B_{\cos\phi_k}(0)$. That is, if the distance between the eigenvalue of the scalar block and eigenvalue of the Jordan block is less than $\cos\phi_k$, the $k^{\rm th}$ higher rank numerical range is a disk.  
\end{proposition}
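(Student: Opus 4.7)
The plan is to observe that the hypothesis forces the exceptional region $\widetilde E_k$ in \cref{theorem: Lambda_k of Jordan plus I m} to collapse, reducing the description of $\Lambda_k(T^0_{\alpha,\beta})$ to just a disk.

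First I would unpack what $k\leq n/2$ gives us for $\cos\phi_k$. From $2k\leq n$ we get $k<\tfrac{n+1}{2}$, so $\phi_k=\tfrac{k\pi}{n+1}<\tfrac{\pi}{2}$ and therefore $\cos\phi_k>0$. In particular $\cos\phi_k=|\cos\phi_k|$, so the hypothesis reads $|\beta-\alpha|\leq|\cos\phi_k|$.

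Next I would use this to pin down $\delta_k$. The first clause in the definition of $\delta_k$ requires $|\beta-\alpha|\geq|\cos\phi_k|$ and $\beta\ne\alpha$; under our hypothesis this can hold only with equality $|\beta-\alpha|=|\cos\phi_k|$, in which case $\delta_k=\arccos 1=0$. In every other case (including $\beta=\alpha$) we fall into the ``otherwise'' clause and again $\delta_k=0$. So in all cases $\delta_k=0$, and together with $k\leq n/2<\tfrac{n+1}{2}$, \cref{lemma: characterize Bk} gives $D_k=[0,2\pi]+2\pi\ZZ$. Consequently $\widetilde D^{\vphantom B}_k=\CC$ and $\widetilde E^{\vphantom B}_k=\emptyset$.

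Finally I would plug this into \cref{theorem: Lambda_k of Jordan plus I m}: the second piece $\widetilde E^{\vphantom B}_k\cap R_{|\beta-\alpha|,k}\cap X$ is empty, while the first piece becomes $\widetilde D^{\vphantom B}_k\cap B_{\cos\phi_k}(0)=\CC\cap B_{\cos\phi_k}(0)=B_{\cos\phi_k}(0)$. Hence $\Lambda_k(T^0_{\alpha,\beta})=B_{\cos\phi_k}(0)$, as claimed. There is no real obstacle here: the only minor care point is verifying that the borderline case $|\beta-\alpha|=\cos\phi_k$ together with the $\beta=\alpha$ case both still deliver $\delta_k=0$, which the definition of $\delta_k$ handles cleanly.
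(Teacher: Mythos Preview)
Your proof is correct and follows essentially the same approach as the paper: both arguments show that the hypothesis forces $D_k=[0,2\pi]$ (hence $\widetilde E^{\vphantom B}_k=\emptyset$) and then invoke \cref{theorem: Lambda_k of Jordan plus I m}. The only difference is cosmetic: the paper reaches $D_k=[0,2\pi]$ directly from the definition of $D_k$ (since $|\beta-\alpha|\cos\theta\leq|\beta-\alpha|\leq\cos\phi_k$ for all $\theta$), whereas you route through the auxiliary angle $\delta_k$ and \cref{lemma: characterize Bk}; both are fine.
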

\begin{proof}
The hypothesis $|\beta-\alpha|\leq\cos\phi_k$ guarantees that $D_k=[0,2\pi]$ and so $\widetilde D^{\vphantom B}_k=\CC$; thus $\widetilde E^{\vphantom B}_{k}=\emptyset$ and the result follows from \cref{theorem: Lambda_k of Jordan plus I m}.
\end{proof}

\begin{proposition}\label{proposition: understanding the shape}
If $k\leq n/2$ and $|\beta-\alpha|>\cos\phi_k$, then  
\[
\left(\widetilde D^{\vphantom B}_k\cap B_{\cos\phi_k}(0)\right)\cup \left(\widetilde E^{\vphantom B}_{k}\cap R_{|\beta-\alpha|,k}\cap X\right)=B_{\cos\phi_k}(0)\cup \left(\widetilde E^{\vphantom B}_{k}\cap R_{|\beta-\alpha|,k}\cap X\right),
\]
where $X=\CC$ if $k\leq m$, and $X=B_{\cos\psi_{k,m}}(0)$ if $k>m$. 
Moreover, the lines $x\cos\delta_k\pm y\sin\delta_k=|\beta-\alpha|\cos\delta_k$ that form the boundary of $R_{|\beta-\alpha|,k}$ are tangent to the circle $x^2+y^2=\cos^2\phi_k$ (that is, to the boundary of $B_{\cos\phi_k}(0)$). 
\end{proposition}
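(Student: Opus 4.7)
The plan is to establish the tangency claim first (a short distance-from-origin calculation) and then use it together with \cref{lemma: the cone} to deduce the set equality.

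For the tangency, I would first note that the hypothesis $|\beta-\alpha|>\cos\phi_k$ combined with $k\leq n/2$ (so $\cos\phi_k\geq 0$) puts us squarely in the first branch of the definition of $\delta_k$, yielding $\cos\delta_k=\cos\phi_k/|\beta-\alpha|$ and in particular $0<\delta_k\leq \pi/2$. Thus $|\beta-\alpha|\cos\delta_k=\cos\phi_k$, and each of the two lines $x\cos\delta_k\pm y\sin\delta_k=|\beta-\alpha|\cos\delta_k$ has unit normal $(\cos\delta_k,\pm\sin\delta_k)$, so its distance to the origin equals $|\beta-\alpha|\cos\delta_k=\cos\phi_k$. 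Hence both lines are tangent to the circle of radius $\cos\phi_k$ centered at $0$.

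For the set equality, the inclusion $\subseteq$ is trivial, so it suffices to prove
\[
B_{\cos\phi_k}(0)\cap \widetilde E^{\vphantom B}_{k}\ \subseteq\ R_{|\beta-\alpha|,k}\cap X.
\]
So pick $\mu=x+iy$ with $x^2+y^2\leq \cos^2\phi_k$ and $\arg\mu\notin D_k$. For the cone membership I would invoke \cref{lemma: the cone}\eqref{lemma: the cone:3} (its hypothesis $0<\delta_k<\pi$ holds by the previous paragraph). By Cauchy--Schwarz,
\[
x\cos\delta_k\pm y\sin\delta_k\ \leq\ \sqrt{x^2+y^2}\,\sqrt{\cos^2\delta_k+\sin^2\delta_k}\ \leq\ \cos\phi_k=|\beta-\alpha|\cos\delta_k,
\]
so $\mu\in R_{|\beta-\alpha|,k}$. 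For the factor $X$: if $k\leq m$ there is nothing to prove since $X=\CC$; if $k>m$, then $0\leq \psi_{k,m}\leq\phi_k\leq\pi/2$ (using $k\leq n/2$ and $m\geq 0$), so $\cos\psi_{k,m}\geq\cos\phi_k$ and $B_{\cos\phi_k}(0)\subseteq B_{\cos\psi_{k,m}}(0)=X$.

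The only real thing to watch out for is the range of $\delta_k$, needed to legitimately apply \cref{lemma: the cone}; this is exactly where the two hypotheses $k\leq n/2$ and $|\beta-\alpha|>\cos\phi_k$ are used (together they force $\delta_k\in(0,\pi/2]$). Everything else is an immediate consequence of the tangency and Cauchy--Schwarz, so I do not expect any substantial obstacle.
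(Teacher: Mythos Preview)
Your proposal is correct. For the set equality you reduce to showing $B_{\cos\phi_k}(0)\cap\widetilde E_k\subseteq R_{|\beta-\alpha|,k}\cap X$ and then verify condition \eqref{lemma: the cone:3} of \cref{lemma: the cone} via Cauchy--Schwarz; the paper does the same thing in polar form, writing $z=re^{i\xi}$ and bounding $r\cos(\xi\pm\delta_k)\leq r\leq\cos\phi_k$, which is the identical inequality. The containment $B_{\cos\phi_k}(0)\subseteq X$ when $k>m$ is also handled the same way in both.

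The one genuine difference is the tangency argument. You compute the distance from the origin to each line using its unit normal $(\cos\delta_k,\pm\sin\delta_k)$ and observe that it equals $|\beta-\alpha|\cos\delta_k=\cos\phi_k$, which is the radius; this is a one-line proof. The paper instead parametrizes the circle, solves for the intersection with each line (obtaining a single point $\t=\mp\delta_k$), and then compares the slope of the line with the slope of the circle at that point. Your distance-to-line computation is cleaner and avoids the case-by-case slope check; the paper's version has the minor advantage of explicitly locating the points of tangency at $(\cos\phi_k\cos\delta_k,\mp\cos\phi_k\sin\delta_k)$, which are exactly the points on the boundary between $\widetilde D_k$ and $\widetilde E_k$, making the geometric picture slightly more explicit.
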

\begin{proof}
The condition $k\leq n/2$ guarantees that $\cos\phi_k>0$. When $k>m$ (the only case where $\psi_{k,m}$ matters) we always have $\cos\psi_{k,m}>\cos\phi_k$ (since $0<k-m<k\leq n/2$). So whenever $z\in B_{\cos\phi_k}(0)$, we have $z\in B_{\cos\psi_{k,m}}(0)$. 

If $z\in \widetilde E^{\vphantom B}_{k}\cap B_{\cos\phi_k}(0)$, we have $z=re^{i\xi}$ with $0\leq r\leq \cos\phi_k$ and $-\delta_k<\xi<\delta_k$. Then (recall that $\delta_k<\pi/2$ from $k\leq n/2$)
\[
r\cos\xi\cos\delta_k\mp r\sin\xi\sin\delta_k=r\cos(\xi\pm\delta_k)\leq\cos\phi_k=|\beta-\alpha|\cos\delta_k,
\]
and so by \cref{lemma: the cone} we have $z=r\cos\xi+ ir\sin\xi\in R_{|\beta-\alpha|,k}$. Thus 
\[
\left(\widetilde D^{\vphantom B}_k\cap B_{\cos\phi_k}(0)\right)\cup \left(\widetilde E^{\vphantom B}_{k}\cap R_{|\beta-\alpha|,k}\cap X\right)\supset B_{\cos\phi_k}(0)\cup \left(\widetilde E^{\vphantom B}_{k}\cap R_{|\beta-\alpha|,k}\cap X\right),
\]
which is the nontrivial inclusion. 

Now for the lines, let us look the intersection of each of the two lines $x\cos\delta_k\pm y\sin\delta_k=|\beta-\alpha|\cos\delta_k$ and the circle $x^2+y^2=\cos^2\phi_k$. Recall that $|\beta-\alpha|\cos\delta_k=\cos\phi_k$. A point in the circle has coordinates $(\cos\phi_k\cos\t,\cos\phi_k\sin\t)$ for some $\t$. If this point belongs to the line $x\cos\delta_k-y\sin\delta_k=\cos\phi_k$, we get 
\[
\cos\phi_k=\cos\phi_k\cos\t\cos\delta_k-\cos\phi_k\sin\t\sin\delta_k=\cos\phi_k\cos(\t+\delta_k). 
\]
The hypothesis $k\leq n/2$ guarantees that $\cos\phi_k\ne0$, so we get
\[
1=\cos(\t+\delta_k)
\]
and thus $\t=-\delta_k$. The slope of the line is $\cot\delta_k$; the slope of the circle at the point $(\cos\phi_k\cos(-\delta_k),\cos\phi_k\sin(-\delta_k))$ is $-1/\tan(-\delta_k)=\cot\delta_k$, so the line is tangent to the circle. 

The other line gives $\t=\delta_k$, and a similar computation shows that it is also tangent to the circle. 
\end{proof}

\subsection{The case $k>n$} In this case we have $\phi_k\geq\pi/2$. 
Recall that $C_{k,m}=\emptyset$ if $k\leq m$.

\begin{lemma}\label{lemma: lambda_k. k greater than m}
If $T^0_{\alpha,\beta}=e^{-i\psi}J_n(0)\oplus|\beta-\alpha|I_m$ and $k>n$, then 
\[
\lambda_k(\re e^{i\t}T^0_{\alpha,\beta})=\begin{cases} 
\cos\psi_{k,m},&\ \t\in C_{k,m}\\[0.3cm]
|\beta-\alpha|\cos\t,&\ \t\not\in C_{k,m}
\end{cases}
\]
As a consequence, 
\[
\Lambda_k(T^0_{\alpha,\beta})=\begin{cases} \{|\beta-\alpha|\},&\ k\leq m\\[0.3cm]
\{|\beta-\alpha|\},&\ k>m\ \text{ and }\ |\beta-\alpha|\leq\cos\psi_{k,m}\\[0.3cm]
\emptyset,&\ k>m\ \text{ and }\ |\beta-\alpha|>\cos\psi_{k,m}
\end{cases}
\]
\end{lemma}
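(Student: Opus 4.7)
The plan is to establish the eigenvalue identity first, then feed it into \cref{theorem: li2008canonical} to read off $\Lambda_k(T^0_{\alpha,\beta})$.

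For the eigenvalue identity, I would proceed as in \cref{lemma: lambda_k. k less than n. m}: by the same Lagrange/Haagerup--de La Harpe conjugation trick, the eigenvalues of $\re(e^{i\t}T^0_{\alpha,\beta})$ are the $n$ numbers $\cos\frac{j\pi}{n+1}$ with $j=1,\ldots,n$ together with $m$ copies of $v:=|\beta-\alpha|\cos\t$. Let $p$ denote the number of Jordan eigenvalues strictly greater than $v$; then the sorted list places the top $p$ Jordan eigenvalues first, the $m$ copies of $v$ next, and the remaining $n-p$ Jordan eigenvalues last. Because $p\leq n<k$, position $k$ never falls in the top block; it lies either in the $v$-block, giving $\lambda_k=v$, or in the lower Jordan block, giving $\lambda_k=\cos\frac{(k-m)\pi}{n+1}=\cos\psi_{k,m}$. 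The cut-off $v>\cos\psi_{k,m}$ is exactly the condition $\t\in C_{k,m}$, and chasing which of the two blocks contains position $k$ under this dichotomy yields the stated formula. When $k\leq m$ one has $p\leq n<k\leq p+m$ automatically, so $\lambda_k=v$ for every $\t$, consistent with $C_{k,m}=\emptyset$.

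For the description of $\Lambda_k(T^0_{\alpha,\beta})$, I apply \cref{theorem: li2008canonical}. In the first two cases---$k\leq m$, or $k>m$ with $|\beta-\alpha|\leq\cos\psi_{k,m}$---a short check shows $C_{k,m}=\emptyset$, so $\lambda_k(\re(e^{i\t}T^0_{\alpha,\beta}))=|\beta-\alpha|\cos\t$ for all $\t$. Writing $\mu=x+iy$, membership $\mu\in\Lambda_k$ becomes
\[
(x-|\beta-\alpha|)\cos\t-y\sin\t\leq 0\ \text{ for every }\t\in[0,2\pi].
\]
The maximum of the left-hand side over $\t$ is $\sqrt{(x-|\beta-\alpha|)^2+y^2}$, so this forces $x=|\beta-\alpha|$ and $y=0$; conversely $\mu=|\beta-\alpha|$ trivially satisfies the inequality, hence $\Lambda_k=\{|\beta-\alpha|\}$.

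The remaining case is $k>m$ with $|\beta-\alpha|>\cos\psi_{k,m}$. Here $0\in C_{k,m}$, so $\lambda_k(\re T^0_{\alpha,\beta})=\cos\psi_{k,m}$; evaluating \cref{theorem: li2008canonical} at $\t=0$ yields $x\leq\cos\psi_{k,m}$ for any $\mu=x+iy\in\Lambda_k$. I then split on whether $\pi\in C_{k,m}$. If $\pi\notin C_{k,m}$---which holds whenever $\cos\psi_{k,m}\geq 0$, since then $-|\beta-\alpha|\leq 0\leq\cos\psi_{k,m}$---then at $\t=\pi$ one gets $-x\leq -|\beta-\alpha|$, i.e., $x\geq|\beta-\alpha|>\cos\psi_{k,m}$, a contradiction. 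If instead $\pi\in C_{k,m}$, which forces $\cos\psi_{k,m}<0$, then at $\t=\pi$ one gets $-x\leq\cos\psi_{k,m}$, i.e., $x\geq-\cos\psi_{k,m}>\cos\psi_{k,m}$, again contradicting $x\leq\cos\psi_{k,m}$. In either subcase $\Lambda_k=\emptyset$.

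The main obstacle is Step 1: getting the interleaving of the $m$ copies of $v$ with the Jordan eigenvalues exactly right and handling the degeneracies when $v$ coincides with some $\cos\frac{j\pi}{n+1}$; once the ordered list is in hand, everything else follows cleanly from \cref{theorem: li2008canonical} with the test angles $\t=0$ and $\t=\pi$.
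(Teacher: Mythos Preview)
Your proposal is correct and follows essentially the same route as the paper: the interleaving count to locate the $k^{\text{th}}$ eigenvalue, then \cref{theorem: li2008canonical} evaluated at the test angles $\t=0$ and $\t=\pi$, splitting on whether $\pi\in C_{k,m}$. Your two variations are cosmetic: in the $C_{k,m}=\emptyset$ case you use the supremum $\sqrt{(x-|\beta-\alpha|)^2+y^2}$ rather than the paper's four angles $0,\pi,\pm\pi/2$, and in the $\pi\in C_{k,m}$ subcase your contradiction ($x\geq -\cos\psi_{k,m}>\cos\psi_{k,m}\geq x$) is a touch more direct than the paper's phrasing, but the underlying idea is identical.
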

\begin{proof}
If $\t\in C_{k,m}$, this means by definition that that $k>m$ and $|\beta-\alpha|\cos\t>\cos\psi_{k,m}$. So the first $k$ eigenvalues of $\re(e^{i\t}T^0_{\alpha,\beta})$ will be
\[
\cos\tfrac\pi{n+1},\ldots,\cos\tfrac{{(j-1)}\pi}{n+1},\overbrace{|\beta-\alpha|\cos\t,\ldots,|\beta-\alpha|\cos\t}^{m\ \text{times}},\cos\tfrac{j\pi}{n+1},\ldots,\cos\tfrac{(k-m)\pi}{n+1},
\]
where $j\in\{1,\ldots,n-m-1\}$. 
Thus the $k^{\rm th}$ eigenvalue is $\cos\tfrac{(k-m)\pi}{n+1}=\cos\psi_{k,m}$. When $\t\not\in C_{k,m}$, the $m$ numbers $|\beta-\alpha|\cos\t$ will sit after $\cos\psi_{k,m}$; that is the list looks like 
\[
\cos\tfrac\pi{n+1},\ldots,\cos\tfrac{{(j-1)}\pi}{n+1},\overbrace{|\beta-\alpha|\cos\t,\ldots,|\beta-\alpha|\cos\t}^{m\ \text{times}},\cos\tfrac{j\pi}{n+1},\ldots,\cos\tfrac{k\pi}{n+1},
\]
where now $j\in\{k-m+1,\ldots,k\}$. Thus the $k^{\rm th}$ eigenvalue will always be $|\beta-\alpha|\cos\t$. That is,
\[
\lambda_k(\re e^{i\t}T^0_{\alpha,\beta})=\begin{cases} 
\cos\psi_{k,m},&\ \t\in C_{k,m}\\[0.3cm]
|\beta-\alpha|\cos\t,&\ \t\not\in C_{k,m}
\end{cases}
\]
Now if $\mu=x+iy\in\Lambda_k(T^0_{\alpha,\beta})$, we have by the above 
\begin{equation}\label{lemma: lambda_k. k greater than m:1}
x\cos\t-y\sin\t=\re(e^{i\t}\mu)\leq\lambda_k(\re(e^{i\t}T^0_{\alpha,\beta}))=\cos\psi_{k,m},\ \ \t\in C_{k,m},
\end{equation}
and
\begin{equation}\label{lemma: lambda_k. k greater than m:2}
x\cos\t-y\sin\t=\re(e^{i\t}\mu)\leq\lambda_k(\re(e^{i\t}T^0_{\alpha,\beta}))=|\beta-\alpha|\cos\t,\ \ \t\not\in C_{k,m}. 
\end{equation}
Suppose that $|\beta-\alpha|>\cos\psi_{k,m}$. Then $0\in C_{k,m}$; we get from \eqref{lemma: lambda_k. k greater than m:1}, with $\t=0$,  that $x\leq\cos\psi_{k,m}$. If $\pi\not\in C_{k,m}$, we get from \eqref{lemma: lambda_k. k greater than m:2} that $-x\leq-|\beta-\alpha|$; so $x\geq|\beta-\alpha|>\cos\psi_{k,m}$ and we get a contradiction. And if $\pi\in C_{k,m}$, now $C_{k,m}=[0,2\pi]$ and so \eqref{lemma: lambda_k. k greater than m:1} gives us $0\leq|\mu|\leq\cos \psi_{k,m}$; but then, using that $\pi\in C_{k,m}$, 
$-|\beta-\alpha|=|\beta-\alpha|\cos\pi>\cos\psi_{k,m}$ giving us $|\beta-\alpha|<-\cos\psi_{k,m}\leq0$, a contradiction. Thus $\Lambda_k(T^0_{\alpha,\beta})=\emptyset$ when $|\beta-\alpha|>\cos\psi_{k,m}$.

If $|\beta-\alpha|\leq\cos\psi_{k,m}$, then $C_{k,m}=\emptyset$, so \eqref{lemma: lambda_k. k greater than m:2} applies for all $\t$. Taking $\t=\pm\pi/2$, we get $\pm y\leq 0$, so $y=0$. Then with $\t=0$ and $\t=\pi$ we get 
$x\leq|\beta-\alpha|$ and $x\geq|\beta-\alpha|$, so $x=|\beta-\alpha|$. Now \eqref{lemma: lambda_k. k greater than m:2} reads $|\beta-\alpha|\cos\t\leq|\beta-\alpha|\cos\t$, which obviously holds for all $\t$ and so $\Lambda_k(T^0_{\alpha,\beta})=\{|\beta-\alpha|\}$. 

When $k\leq m$, we have $C_{k,m}=\emptyset$ and the previous paragraph applies. 
\end{proof}

We can now prove our main result. We remark that the area $B_{\cos\phi_k}(0)\cup (\widetilde E^{\vphantom B}_{k}\cap R_{|\beta-\alpha|,k})$ below is precisely the sector $\{\mu\in\CC:\ \re e^{i\t}\mu\leq r,\ \delta_k\leq\t\leq2\pi-\delta_k\}$; we use the former notation to avoid using $\delta_k$ in the statements. 

\begin{proposition}\label{proposition: Omegak of T alpha beta m}
Let $T^0_{\alpha,\beta}=e^{i\psi}J_n(0)\oplus |\beta-\alpha|I_m$. Let $k\in\{1,\ldots,n+m\}$. Then $\Lambda_k(T^0_{\alpha,\beta})$ is as in the following table:  \\[0.1cm]

\tcbset{tab2/.style={enhanced,fonttitle=\bfseries,fontupper=\normalsize\sffamily,
colback=yellow!10!white,colframe=red!50!black,colbacktitle=Salmon!30!white,
coltitle=black,center title}}

\renewcommand{\arraystretch}{1.5}
\newcounter{lista}
\setcounter{lista}{1}

\newsavebox{\tablaprop}
\savebox{\tablaprop}{
\begin{tabular}{rll}
\multicolumn{1}{c}{ }&\multicolumn{1}{c}{\text{$\Lambda_k(T^0_{\alpha,\beta})$}} &\multicolumn{1}{c}{ Condition  }       \\\hline  \\[-0.1cm]
\arabic{lista}\stepcounter{lista}& $B_{\cos\phi_k}(0)$ & $\ 1\leq k\leq\frac n2,\   |\beta-\alpha|\leq\cos\phi_k$ \ \\[0.3cm] \hline
\arabic{lista}\stepcounter{lista}& $B_{\cos\phi_k}(0)\cup (\widetilde E^{\vphantom B}_{k}\cap R_{|\beta-\alpha|,k})$&$\ 1\leq k\leq\frac n2, \ k\leq m,\ |\beta-\alpha|>\cos\phi_k$ \ \\[0.3cm]  \hline
\arabic{lista}\stepcounter{lista}& $B_{\cos\phi_k}(0)\ \cup \left(\vphantom{R_\beta^\beta}\widetilde E^{\vphantom B}_{k}\cap R_{|\beta-\alpha|,k}\cap B_{\cos\psi_{k,m}}(0)\right)$ & $\ 1\leq k\leq\frac n2,\  k> m,\ |\beta-\alpha|>\cos\phi_k$ \ \\[0.3cm]  \hline

\arabic{lista}\stepcounter{lista}& $[0,|\beta-\alpha|]$ & $\ k=\frac{n+1}2\leq m$,\ \text{ or }  \ \\[0.3cm] 
&& $\ k=\frac{n+1}2> m,\ |\beta-\alpha|\leq\cos\psi_{k,m}$ \ \\[0.3cm]  \hline
\arabic{lista}\stepcounter{lista}& $[0,\cos\psi_{k,m}]$ & $\ k=\frac{n+1}2> m, \ |\beta-\alpha|>\cos\psi_{k,m}$ \ \\[0.3cm]  \hline
\arabic{lista}\stepcounter{lista}& $\{|\beta-\alpha|\}$ & $\ \frac{n+1}2<k\leq m$,\ \text{or} \ \\[0.3cm]
&&$\ \frac{n+1}2<k,\ k> m,\ |\beta-\alpha|\leq\cos\psi_{k,m}$ \ \\[0.3cm] \hline
\arabic{lista}\stepcounter{lista}& $\emptyset$ & $\ \frac{n+1}2<k,\ k> m,\ |\beta-\alpha|>\cos\psi_{k,m}$ \ \\[0.3cm]
\end{tabular}
} % savebox
\begin{center}
\resizebox{14cm}{!}{
\begin{tcolorbox}[tab2,tabularx={rll},title={$T^0_{\alpha,\beta}=e^{-i\psi}J_n(0)\oplus |\beta-\alpha|I_m$},boxrule=0.5pt,width=\wd\tablaprop]
\usebox{\tablaprop}
\end{tcolorbox}
} % resizebox
\end{center}
\end{proposition}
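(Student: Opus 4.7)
The plan is to apply \cref{theorem: Lambda_k of Jordan plus I m} when $k\leq n$ and \cref{lemma: lambda_k. k greater than m} when $k>n$, and to use \cref{proposition: understanding the shape 2,proposition: understanding the shape} to repackage the expressions. The table is then checked row by row, organized by the trichotomy $k\leq n/2$ (rows 1--3), $k=(n+1)/2$ (rows 4--5), and $k>(n+1)/2$ (rows 6--7).

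Rows 1--3 are direct. Row 1 is exactly \cref{proposition: understanding the shape 2}. For rows 2 and 3, \cref{theorem: Lambda_k of Jordan plus I m} combined with \cref{proposition: understanding the shape} replaces $\widetilde D_k\cap B_{\cos\phi_k}(0)$ by $B_{\cos\phi_k}(0)$, and the two rows split on whether $X=\CC$ (the case $k\leq m$) or $X=B_{\cos\psi_{k,m}}(0)$ (the case $k>m$ with $C_{k,m}\ne\emptyset$).

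Rows 4 and 5 require $n$ odd so that $k=(n+1)/2$ is an integer. Then $\cos\phi_k=0$, and when $\beta\ne\alpha$ we have $\delta_k=\pi/2$; hence $B_{\cos\phi_k}(0)=\{0\}$ and the cone $R_{|\beta-\alpha|,k}$ degenerates, via $\cot(\pi/2)=0$, to the segment $(-\infty,|\beta-\alpha|]\times\{0\}$. Intersecting with $\widetilde E_k$ (the open right half-plane) gives $(0,|\beta-\alpha|]$, whose union with $\{0\}$ is $[0,|\beta-\alpha|]$; this is row 4. For row 5 the extra factor $B_{\cos\psi_{k,m}}(0)$ truncates this to $[0,\cos\psi_{k,m}]$. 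The $k>m$ sub-case of row 4 is handled because $|\beta-\alpha|\leq\cos\psi_{k,m}$ forces $C_{k,m}=\emptyset$ by \cref{lemma: characterize Ck}, giving $X=\CC$. The edge case $\beta=\alpha$ reduces to row 1.

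Rows 6 and 7 split on $k>n$ versus $(n+1)/2<k\leq n$. In the first case the answer is read directly off \cref{lemma: lambda_k. k greater than m}. In the second case $\cos\phi_k<0$ kills the disk contribution, so only $\widetilde E_k\cap R_{|\beta-\alpha|,k}\cap X$ remains. The main obstacle is to show that this collapses to the single point $\{|\beta-\alpha|\}$ before being filtered by $X$, which then produces row 6 or row 7 according to whether $|\beta-\alpha|\leq\cos\psi_{k,m}$ or not. When $\delta_k\in(\pi/2,\pi)$, $\cot\delta_k<0$ and the defining inequalities $(x-r)\cot\delta_k\leq y\leq(r-x)\cot\delta_k$ combined with $x\leq r$ force $x=|\beta-\alpha|$ and $y=0$. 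The edge cases $\delta_k\in\{0,\pi\}$ lie outside the hypothesis of \cref{lemma: the cone}, so one argues directly from \cref{theorem: li2008canonical} and \cref{lemma: lambda_k. k less than n. m}, testing the resulting inequalities at $\t=0,\pi,\pm\pi/2$ to pin $\mu$ down to $|\beta-\alpha|$.
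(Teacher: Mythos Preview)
Your proposal is correct and follows essentially the same case-by-case route as the paper. The only substantive difference is in rows 6--7: for $(n+1)/2<k\leq n$ you collapse $R_{|\beta-\alpha|,k}$ to the single point $|\beta-\alpha|$ by reading off the defining inequalities with $\cot\delta_k<0$, whereas the paper invokes \cref{lemma: the cone} to convert membership in $R_{|\beta-\alpha|,k}$ into the family of half-plane inequalities and then tests at $\theta=\pm\pi/2$ and at values of $\theta$ on either side of $\pi/2$; your version is marginally more direct. You are also more explicit than the paper about the degenerate endpoints $\delta_k\in\{0,\pi\}$, which the paper's use of \cref{lemma: the cone} technically does not cover. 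One small imprecision: in the $k=(n+1)/2$ discussion, the phrase ``reduces to row 1'' for $\beta=\alpha$ is not literally correct since row~1 requires $k\leq n/2$; what you mean (and what works) is that $D_k=[0,2\pi]$ so $\Lambda_k=B_0(0)=\{0\}=[0,|\beta-\alpha|]$.
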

\begin{proof}
We go through the conditions in the table.
\setcounter{lista}{1}
\begin{enumerate}
\item[(\arabic{lista})]\stepcounter{lista} $k\leq\frac n2,\   |\beta-\alpha|\leq\cos\phi_k$: By \cref{proposition: understanding the shape 2},
\[
\Lambda_k(T^0_{\alpha,\beta})=B_{\cos\phi_k}(0).
\]
\item[(\arabic{lista})]\stepcounter{lista} $k\leq\frac n2, \ k\leq m,\ |\beta-\alpha|>\cos\phi_k$: Here $\phi_k<\pi/2$, so $\cos\phi_k>0$. By \cref{proposition: understanding the shape},
\[
\Lambda_k(T^0_{\alpha,\beta})=B_{\cos\phi_k}(0)\cup (\widetilde E^{\vphantom B}_{k}\cap R_{|\beta-\alpha|,k}).
\]

\item[(\arabic{lista})]\stepcounter{lista} $k\leq\frac n2, \ k> m,\ |\beta-\alpha|>\cos\phi_k$: Again $\phi_k<\pi/2$, so $\cos\phi_k>0$. By \cref{proposition: understanding the shape},
\[
\Lambda_k(T^0_{\alpha,\beta})=B_{\cos\phi_k}(0)\cup (\widetilde E^{\vphantom B}_{k}\cap R_{|\beta-\alpha|,k}\cap B_{\cos\psi_{k,m}}(0)).
\]

\item[(\arabic{lista})]\stepcounter{lista}\label{item: step 4} $k=\frac{n+1}2\leq m$: now $\cos\phi_k=\cos\pi/2=0$, so $\delta_k=\pi/2$ and  $D_k=[\pi/2,3\pi/2]$. From \cref{theorem: Lambda_k of Jordan plus I m} we have 
\[
\Lambda_k(T^0_{\alpha,\beta})=\left(\vphantom{R_\beta^\beta}\widetilde D^{\vphantom B}_k\cap  B_{\cos\phi_k}(0)\right)\cup\left(\vphantom{R_\beta^\beta}\widetilde E^{\vphantom B}_{k}\cap R_{|\beta-\alpha|,k}\right).
\]
Since $\cos\phi_k=0$, the first intersection is $\{0\}$. And $\widetilde E^{\vphantom B}_{k}$ consists of those $\mu$ with $\arg\mu\in(-\pi/2,\pi/2)$, that is with non-negative real part. As $\delta_k=\pi/2$, we have $\cot\delta_k=0$, and with arguments like those in the proof of  \cref{lemma: the cone} we get that $R_{|\beta-\alpha|,k}=(-\infty,|\beta-\alpha|]$. So $\widetilde E^{\vphantom B}_{k}\cap R_{|\beta-\alpha|,k}=[0,|\beta-\alpha|]$ and thus $\Lambda_k(T^0_{\alpha,\beta})=[0,|\beta-\alpha|]$.

When  $k=\frac{n+1}2> m$ and $|\beta-\alpha|\leq\cos\psi_{k,m}$, even though $k>m$ we have  $C_{k,m}=\emptyset$; then the exact reasoning from previous paragraph applies. 

\item[(\arabic{lista})]\stepcounter{lista} $k=\frac{n+1}2> m, \ |\beta-\alpha|>\cos\psi_{k,m}$: now $C_{k,m}\ne\emptyset$. From \cref{theorem: Lambda_k of Jordan plus I m} we have 
\[
\Lambda_k(T^0_{\alpha,\beta})=\left(\vphantom{R_\beta^\beta}\widetilde D^{\vphantom B}_k\cap  B_{\cos\phi_k}(0)\right)\cup\left(\vphantom{R_\beta^\beta}\widetilde E^{\vphantom B}_{k}\cap R_{|\beta-\alpha|,k}\cap B_{\cos\psi_{k,m}(0)}\right).
\]
As in the previous step, we get $\widetilde E^{\vphantom B}_{k}\cap R_{|\beta-\alpha|,k}=[0,|\beta-\alpha|]$, but now we also have to cut with $B_{\cos\psi_{k,m}(0)}$. So $\Lambda_k(T^0_{\alpha,\beta})=[0,\cos\psi_{k,m}]$. 

\item[(\arabic{lista})]\stepcounter{lista} $\tfrac {n+1}2<k\leq n$, $k\leq m$: We again apply \cref{theorem: Lambda_k of Jordan plus I m} to get 
\[
\Lambda_k(T^0_{\alpha,\beta})=\left(\vphantom{R_\beta^\beta}\widetilde D^{\vphantom B}_k\cap B_{\cos\phi_k}(0)\right)
\cup
\left(\vphantom{R_\beta^\beta}\widetilde E^{\vphantom B}_{k}\cap R_{|\beta-\alpha|,k}\right).
\]
 From $k>(n+1)/2$ we get that $\phi_k>\pi/2$, so $\cos\phi_k<0$. This makes $\widetilde D^{\vphantom B}_k\cap B_{\cos\phi_k(0)}=\emptyset$ and $\pm\pi/2\not\in D_k$. By \cref{lemma: the cone}, if $x+iy\in R_{|\beta-\alpha|,k}$, we have 
\begin{equation}\label{equation: inequality for step 6}
x\cos\t-y\sin\t\leq|\beta-\alpha|\cos\t,\ \ \t\not\in D_k. 
\end{equation}
With $\t=\pm\pi/2$, we get $\pm y\leq0$, so $y=0$. Now the inequality \eqref{equation: inequality for step 6} is $x\cos\t\leq|\beta-\alpha|\cos\t$ for all $\t\not\in D_k$. Since $\delta_k>\pi/2$, the set $[0,2\pi]\setminus D_k$ contains $\t$ with $\t<\pi/2$ and also $\t$ with $\t>\pi/2$. Using these $\t$ we get $x\leq|\beta-\alpha|$ and $-x\leq-|\beta-\alpha|$, so $x=|\beta-\alpha|$. Thus $R_{|\beta-\alpha|,k}=\{|\beta-\alpha|\}$ and so $\Lambda_k(T^0_{\alpha,\beta})=\{|\beta-\alpha|\}$.

\bigskip

When  $n<k\leq m$: \cref{lemma: lambda_k. k greater than m} gives us directly that $\Lambda_k(T^0_{\alpha,\beta})=\{|\beta-\alpha|\}$. 

\bigskip

When $\tfrac {n+1}2<k$, $k>m$, $|\beta-\alpha|\leq\cos\psi_{k,m}$: Assume first that $k\leq n$. From \cref{theorem: Lambda_k of Jordan plus I m}, and noting that $\cos\phi_k<0$, we have 
\[\
\Lambda_k(T^0_{\alpha,\beta})=\widetilde E^{\vphantom B}_{k}\cap R_{|\beta-\alpha|,k}\cap B_{\cos\psi_{k,m}}.
\] Using, as above, that $\pm\pi/2\not\in D_k$, we get that $R_{|\beta-\alpha|,k}=\{|\beta-\alpha|\}$. As $|\beta-\alpha|\leq\cos\psi_{k,m}$, we have $\Lambda_k(T^0_{\alpha,\beta})=\{|\beta-\alpha|\}$.

When $k>n$, $k>m$, and $|\beta-\alpha|\leq\cos\psi_{k,m}$, \cref{lemma: lambda_k. k greater than m} gives us the result.

\item[(\arabic{lista})]\stepcounter{lista}  $\tfrac {n+1}2<k $, $k>m$, $|\beta-\alpha|>\cos\psi_{k,m}$: Assume first that $k\leq n$. As in the previous cases, the only possible value for $x$ is $|\beta-\alpha|$. But now the condition $|\beta-\alpha|>\cos\psi_{k,m}$ means that $|\beta-\alpha|\not\in B_{\cos\psi_{k,m}}(0)$, so by \cref{theorem: Lambda_k of Jordan plus I m} we have $\Lambda_k(T^0_{\alpha,\beta})=\emptyset$. 

When $k>n$, \cref{lemma: lambda_k. k greater than m} gives us the result. \qedhere
\end{enumerate}
\end{proof}

Now we can do the rotated and translated version of \cref{proposition: Omegak of T alpha beta m}. For this 
we consider the translated and rotated versions of $\widetilde E^{\vphantom B}_{k}$ and $R_{r,k}$, 
\[
\widetilde E^{\psi}_k =\alpha+e^{i\psi}\widetilde E^{\vphantom B}_{k},\ \ \ \ R_{r,k}^\psi=\alpha+e^{i\psi}R_{r,k}. 
\]
We will use the notation 
\[
[\alpha,\beta]=\{\alpha+t(\beta-\alpha):\ t\in[0,1]\}=\{(1-t)\alpha+t\beta:\ t\in[0,1]\}. 
\]
Finally, we get to write explicitly the higher rank numerical ranges of $J_n(\alpha)\oplus\beta I_m$. 

\begin{theorem}\label{theorem: Omegak of T alpha beta m}
Let $T=J_n(\alpha)\oplus \beta I_m$. Let $k\in\{1,\ldots,n+m\}$. Put $\psi=\arg(\beta-\alpha)$. Then 
$\Lambda_k(T)$ is expressed by the following table: \\[0.1cm]
\tcbset{tab2/.style={enhanced,fonttitle=\bfseries,fontupper=\normalsize\sffamily,
colback=yellow!10!white,colframe=red!50!black,colbacktitle=Salmon!30!white,
coltitle=black,center title}}

\renewcommand{\arraystretch}{1.5}
\setcounter{lista}{1}

\newsavebox{\tablateorema}
\savebox{\tablateorema}{
\begin{tabular}{rll}
\multicolumn{1}{c}{ }&\multicolumn{1}{c}{\text{$\Lambda_k(T)$}} &\multicolumn{1}{c}{ \textrm{Condition}  }       \\\hline  \\[-0.1cm]
\arabic{lista}\stepcounter{lista}& $B_{\cos\phi_k}(\alpha)$ & $\ 1\leq k\leq\frac n2,\   |\beta-\alpha|\leq\cos\phi_k$ \ \\[0.3cm] \hline
\arabic{lista}\stepcounter{lista}& $B_{\cos\phi_k}(\alpha)\cup (\widetilde E^{\psi}_k \cap R^\psi_{|\beta-\alpha|,k})$&$\ 1\leq k\leq\frac n2, \ k\leq m,\ |\beta-\alpha|>\cos\phi_k$ \ \\[0.3cm]  \hline
\arabic{lista}\stepcounter{lista}& $B_{\cos\phi_k}(\alpha)\ \cup \left(\vphantom{R_\beta^\beta}\widetilde E^{\psi}_k \cap R^\psi_{|\beta-\alpha|,k}\cap B_{\cos\psi_{k,m}}(\alpha)\right)$ & $\ 1\leq k\leq\frac n2,\  k> m,\ |\beta-\alpha|>\cos\phi_k$ \ \\[0.3cm]  \hline
\arabic{lista}\stepcounter{lista}& $[\alpha,\beta]$ & $\ k=\frac{n+1}2\leq m$,\ \text{ or}  \ \\[0.3cm] 
&& $\ k=\frac{n+1}2> m,\ |\beta-\alpha|\leq\cos\psi_{k,m}$ \ \\[0.3cm]  \hline
\arabic{lista}\stepcounter{lista}& $\{\alpha+t(\beta-\alpha)\cos\eta_{k,m}:\ t\in[0,1]\}$ & $\ k=\frac{n+1}2> m, \ |\beta-\alpha|>\cos\psi_{k,m}$ \ \\[0.3cm]  \hline
\arabic{lista}\stepcounter{lista}& $\{\beta\}$ & $\ \frac{n+1}2<k\leq  m$, \ \text{ or}  \ \\[0.3cm]  
& & $\ \frac{n+1}2<k,\  m<k,\ |\beta-\alpha|\leq\cos\psi_{k,m}$ \ \\[0.3cm] \hline
\arabic{lista}\stepcounter{lista}& $\emptyset$ & $\ \frac{n+1}2<k,\ m<k,\ |\beta-\alpha|>\cos\psi_{k,m}$ \ \\[0.3cm]
\end{tabular}
} % savebox
\begin{center}
\resizebox{14cm}{!}{
\begin{tcolorbox}[tab2,tabularx={rll},title={$T=J_n(\alpha)\oplus \beta I_m$},boxrule=0.5pt,width=\wd\tablateorema]
\usebox{\tablateorema}
\end{tcolorbox}
} % resizebox
\end{center}
\end{theorem}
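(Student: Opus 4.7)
The plan is to reduce the theorem to Proposition \ref{proposition: Omegak of T alpha beta m} by exploiting the translation and rotation invariance of higher rank numerical ranges. Recall from the discussion preceding Proposition \ref{proposition: Omegak of T alpha beta m} that $T = \alpha I_{n+m} + e^{i\psi} T^0_{\alpha,\beta}$, where $\psi = \arg(\beta - \alpha)$. Since the higher rank numerical range satisfies $\Lambda_k(\gamma I + \lambda S) = \gamma + \lambda \Lambda_k(S)$ for any scalars $\gamma, \lambda \in \CC$ (translation invariance is noted in the introduction, and invariance under scalar multiplication follows immediately from definition \eqref{equation: definition of Lambda k}), we obtain
\[
\Lambda_k(T) = \alpha + e^{i\psi}\,\Lambda_k(T^0_{\alpha,\beta}).
\]
The proof is then a matter of applying the map $\mu \mapsto \alpha + e^{i\psi}\mu$ to each of the seven cases of Proposition \ref{proposition: Omegak of T alpha beta m} and matching the result to the claimed expressions.

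Most of the cases translate transparently. For any $r \geq 0$, $\alpha + e^{i\psi} B_r(0) = B_r(\alpha)$ because the disk is rotation-symmetric about $0$. By the very definitions stated before \cref{theorem: Omegak of T alpha beta m}, $\alpha + e^{i\psi}\widetilde E^{\vphantom B}_{k} = \widetilde E^{\psi}_k$ and $\alpha + e^{i\psi} R_{|\beta-\alpha|,k} = R^\psi_{|\beta-\alpha|,k}$, which handles cases $2$ and $3$. Case $6$ follows from $\alpha + e^{i\psi}\{|\beta-\alpha|\} = \{\alpha + (\beta-\alpha)\} = \{\beta\}$, and case $7$ is immediate since the empty set is preserved.

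For case $4$, the image of $[0,|\beta-\alpha|]$ under $\mu \mapsto \alpha + e^{i\psi}\mu$ is
\[
\{\alpha + e^{i\psi} t|\beta-\alpha|:\ t\in [0,1]\} = \{\alpha + t(\beta-\alpha):\ t\in[0,1]\} = [\alpha,\beta],
\]
using $e^{i\psi}|\beta-\alpha| = \beta - \alpha$.

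The only step that requires a small identification is case $5$. Here the output of Proposition \ref{proposition: Omegak of T alpha beta m} is $[0, \cos\psi_{k,m}]$, which maps to $\{\alpha + t e^{i\psi}\cos\psi_{k,m}: t\in[0,1]\}$. To match the statement of the theorem, I will use the definition of $\eta_{k,m}$: since the hypothesis $|\beta-\alpha| > \cos\psi_{k,m}$ in this row guarantees $|\beta-\alpha| \geq |\cos\psi_{k,m}|$ and $\beta \neq \alpha$, we are in the nontrivial branch of $\eta_{k,m}$ and so $\cos\eta_{k,m} = \cos\psi_{k,m}/|\beta-\alpha|$. Therefore $e^{i\psi}\cos\psi_{k,m} = e^{i\psi}|\beta-\alpha|\cos\eta_{k,m} = (\beta-\alpha)\cos\eta_{k,m}$, and the image becomes $\{\alpha + t(\beta-\alpha)\cos\eta_{k,m}: t\in[0,1]\}$, as claimed. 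This bookkeeping is the only real subtlety; the rest is a straightforward application of the affine transformation to each line of the earlier table.
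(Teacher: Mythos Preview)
Your proof is correct and follows essentially the same route as the paper: reduce to Proposition~\ref{proposition: Omegak of T alpha beta m} via the affine map $\mu\mapsto\alpha+e^{i\psi}\mu$, then translate each row of the table, with the only nontrivial identification being the rewriting of $\cos\psi_{k,m}$ as $|\beta-\alpha|\cos\eta_{k,m}$ in case~5. Your write-up is in fact slightly more explicit than the paper's in spelling out why each row transforms as claimed.
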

\begin{proof}
We have $T=\alpha I_{n+m}+e^{i\psi}\,T^0_{\alpha,\beta}$. So $\Lambda_k(T)=\alpha+e^{i\psi}\Lambda_k(T^0_{\alpha,\beta})$. Thus the result is a direct application of \cref{proposition: Omegak of T alpha beta m}. Note that $\alpha+e^{i\psi}|\beta-\alpha|=\alpha+\beta-\alpha=\beta$, and 
\[
\alpha+e^{i\psi}[0,|\beta-\alpha|]=\{\alpha+t\,e^{i\psi}|\beta-\alpha|:\ t\in[0,1]\}
=\{\alpha+t(\beta-\alpha):\ t\in[0,1]\}. 
\]
Also,
\[
\alpha+te^{i\psi}\cos\psi_{k,m}=\alpha+te^{i\psi}|\beta-\alpha|\cos\eta_{k,m}
=\alpha+t(\beta-\alpha)\cos\eta_{k,m}.\qedhere
\]
\end{proof}

\begin{examples}\label{remark: examples}
We include a few graphic examples of $\Lambda_k(J_n(\alpha)\oplus \beta I_m)$. The graphs were produced with a Javascript program that draws the lines $x\cos\t-y\cos\t=\lambda_k(T)$ for $\t$ ranging (in degrees) from 1 to 359. This is not always an accurate representation, because in some cases the intersection of the semiplanes is empty but the lines still leave a clearly unshaded region; for this we produced a version of the script that indicates  the semiplanes instead of just drawing the lines. This issue does not make an appearance in the examples we included. The tool is available upon request. 

We can see in these pictures the situation described in \cref{proposition: understanding the shape 2,proposition: understanding the shape}.

\begin{enumerate}
\item In \cref{n4i4k1}, the unshaded region represents $\Lambda_1(J_4(0)\oplus I_4)$. In \cref{n4i4k2} we see $\Lambda_2(J_4(0)\oplus I_4)$. Grid lines are set on integer multiples of $0.2$. 
\item In \cref{n5i5k1}, we have $\Lambda_1(J_5(-1-i)\oplus (1-2i)I_5)$, and in \cref{n5i5k2}, we have $\Lambda_2(J_5(-1-i)\oplus (1-2i)I_5)$
\end{enumerate}
\end{examples}

\begin{figure} % for side-by-side
\centering
\begin{minipage}{0.47\textwidth}
\includegraphics[width=\mytextwidth]{n4i4k1.jpg}

\captionsetup{width=\textwidth,justification=centering}
\caption{$n=4$, $m=4$, $k=1$,  $\alpha=0$, $\beta=1$}
\label{n4i4k1}

\end{minipage}\hfill
\begin{minipage}{0.47\textwidth}

\includegraphics[width=\mytextwidth]{n4i4k2.jpg}

\captionsetup{width=\textwidth,justification=centering}
\caption{$n=4$, $m=4$, $k=2$, $\alpha=0$, $\beta=1$}
\label{n4i4k2}
\end{minipage}
\end{figure}

\begin{figure}
\centering
\begin{minipage}{0.47\textwidth}
\includegraphics[width=\mytextwidth]{n5i5k1.jpg}

\captionsetup{width=\textwidth,justification=centering}
\caption{$n=5$, $m=5$, $k=1$, $\alpha=-1-i$, $\beta=1-2i$}
\label{n5i5k1}
\end{minipage}\hfill
\begin{minipage}{0.47\textwidth}
\includegraphics[width=\mytextwidth]{n5i5k2.jpg}

\captionsetup{width=\textwidth,justification=centering}
\caption{$n=5$, $m=5$, $k=2$, $\alpha=-1-i$, $\beta=1-2i$}
\label{n5i5k2}
\end{minipage}
\end{figure}

\begin{remark}\label{remark: new radius}
When $m<n$, a new radius, $\cos\psi_{k,m}$, makes an appearance if $m<k\leq n/2$. In \cref{n5i5k1} this does not occur, but it does in \cref{n5i1k2}, for $\Lambda_2(J_5(-1-i)\oplus (1-2i))$. This is a case where $\Lambda_k(T)$ is not a convex combination of certain (higher) numerical ranges of its direct summands. In \cref{n5i1k2BC} we can see a representation of the (areas corresponding to the) sets $D_k$---in blue---and $C_{k,m}$---in red. 
\end{remark}

\begin{figure}
\centering
\begin{minipage}{0.47\textwidth}
\includegraphics[width=\mytextwidth]{n5i1k2.jpg}
\captionsetup{width=\textwidth,justification=centering}
\caption{$n=5$, $m=1$, $k=2$, $\alpha=-1-i$, $\beta=1-2i$}
\label{n5i1k2}
\end{minipage}\hfill
\begin{minipage}{0.47\textwidth}
{\begin{tikzpicture}[thick,scale=1, every node/.style={transform shape}]
    \node[anchor=south west,inner sep=0] at (0,0) 
{\includegraphics[width=\mytextwidth]{n5i1k2_b.jpg}};

\node[anchor=south west,inner sep=0] at (0.82,0.87) {
    \begin{tikzpicture}[scale=0.47, every node/.style={scale=0.6}]
    \coordinate (centre) at (4.75,4.83);

     \draw[color=red,thick] (centre) -- (9.77,6.17);
     \draw[color=red,thick] (centre) -- (9.77,3.49);
     \draw[color=blue,thick] (centre) -- (5.75,7.65);
     \draw[color=blue,thick] (centre) -- (5.75,2.01);
     \draw[color=yellow] (centre) circle [radius=2.98];
     \draw[color=blue, thick] ($ (centre) !0.5! (5.75,7.65)$) arc [radius=1.49, start angle=70, end angle=290];
     \node[color=blue] at  ($(centre) + (-2,0.5) $)   {$D_k$};
     \node[color=red] at  ($(centre) + (3,-0.5) $)   {$C_{k,m}$};
     \draw[color=red,thick] (8.4,3.84) arc [radius=4.2, start angle=-15, end angle=12];
    \end{tikzpicture}
};     
\end{tikzpicture}
}
\captionsetup{width=\textwidth,justification=centering}
\caption{$n=5$, $m=1$, $k=2$, $\alpha=0$, $\beta=\sqrt2$}
\label{n5i1k2BC}
\end{minipage}
\end{figure}

\FloatBarrier

\section{Remarks and Applications}\label{section: remarks}

\begin{remark}\label{remark: corner point}
The results in \cref{theorem: Omegak of T alpha beta m} and the accompanying images show concrete examples of the following result of Chang, Gau, and Wang (here $W_k(T)$ denotes Halmos' higher numerical range): 

\begin{proposition}[\cite{ChangGauWang-EqualityHigherRankNumericalRanges2014}]
Let $T\in M_n(\CC)$, $k\in\{1,\ldots,n\}$, and $\beta\in \Lambda_1(T)$ a point that is a corner. The following statements are equivalent: 
\begin{enumerate}
\item $\beta$ is a corner of $W_k(T)$;
\item $\beta$ is a corner of $\Lambda_k(T)$;
\item $T$ is unitarily equivalent to $\beta I_m\oplus C$, with $m\geq k$ and $\beta\not\in \Lambda_1(C)$. 
\end{enumerate}
\end{proposition}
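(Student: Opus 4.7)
The plan is to reduce everything to a classical structural fact: if $\beta$ is a corner of $\Lambda_1(T)$, then $\beta$ is a \emph{normal eigenvalue} of $T$, i.e.\ $T$ is unitarily equivalent to $\beta I_\ell\oplus C$ for some $\ell\ge 1$ with $\beta\notin\Lambda_1(C)$ (Donoghue's theorem, see e.g.\ \cite{roger1994topics}). Once this decomposition is secured, each of $(1)$, $(2)$, $(3)$ becomes a statement about the integer $\ell$, and the whole equivalence collapses to proving $(1)\Leftrightarrow(2)\Leftrightarrow \ell\ge k$.

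The implications $(3)\Rightarrow(2)$ and $(3)\Rightarrow(1)$ are direct: given $T=\beta I_m\oplus C$ with $m\ge k$, any rank-$k$ projection $P$ whose range lies inside the $\beta I_m$-summand satisfies $PTP=\beta P$ and $\tr(TP)=k\beta$, so $\beta\in\Lambda_k(T)$ and $k\beta\in W_k(T)$. That these are corners is inherited from $\Lambda_1(T)$: both $\Lambda_k(T)$ and $W_k(T)/k$ are convex subsets of $\Lambda_1(T)$ (the latter because $\tr(TP)/k$ is the average of $\langle Tx_i,x_i\rangle$ over any orthonormal basis of $\operatorname{ran} P$), so the two distinct supporting lines of $\Lambda_1(T)$ at the corner $\beta$ also support these smaller sets at $\beta$.

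For $(2)\Rightarrow(3)$, fix a rank-$k$ projection $P$ with $PTP=\beta P$ witnessing $\beta\in\Lambda_k(T)$. Writing any unit $x\in\operatorname{ran} P$ as $x=x_1\oplus x_2$ in the decomposition $T=\beta I_\ell\oplus C$, we get $\beta=\langle Tx,x\rangle=\beta\|x_1\|^2+\langle Cx_2,x_2\rangle$, which rearranges to $\beta\|x_2\|^2=\langle Cx_2,x_2\rangle$; if $x_2\ne 0$ this exhibits $\beta\in\Lambda_1(C)$, contradicting $\beta\notin\Lambda_1(C)$. Hence $\operatorname{ran} P$ sits inside the $\ell$-dimensional $\beta$-eigenspace, and $k=\rank P\le\ell$. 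For $(1)\Rightarrow(3)$, the same inclusion-of-supporting-lines argument first yields that $\beta$ is a corner of $\Lambda_1(T)$, producing the decomposition. After rotating so that $\beta$ is real and equals $\max\{\re z:z\in\Lambda_1(T)\}$, Ky Fan's maximum principle gives $\max_P\re\tr(TP)=\sum_{j=1}^k\lambda_j(\re T)$; the corner hypothesis forces this maximum to equal $k\beta$, so $\lambda_1(\re T)=\cdots=\lambda_k(\re T)=\beta$. By Donoghue's theorem the top eigenspace of $\re T$ at $\beta$ coincides with the $\beta$-eigenspace of $T$, so $\ell\ge k$.

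The main obstacle is the clean invocation of Donoghue's theorem and the identification it provides between the top eigenspace of $\re T$ and the $T$-eigenspace at $\beta$ when $\beta$ is a corner; once those structural statements are in place, the rest of the argument is little more than dimension counting against $\rank P=k$.
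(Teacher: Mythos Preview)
The paper does not prove this proposition; it is quoted from \cite{ChangGauWang-EqualityHigherRankNumericalRanges2014} in Section~\ref{section: remarks} solely to frame the discussion of the examples, so there is no proof in the paper to compare yours against.

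Your outline is the standard one and is largely sound, but two points need attention. First, with the paper's unnormalized convention $W_k(T)=\{\tr(TP):\rank P=k\}$, what your construction in $(3)\Rightarrow(1)$ produces is $k\beta$, not $\beta$; you already account for this when you pass to $W_k(T)/k\subset\Lambda_1(T)$, but the statement as quoted should be read with that scaling in mind. Second, in $(1)\Rightarrow(3)$ the assertion ``the top eigenspace of $\re T$ at $\beta$ coincides with the $T$-eigenspace at $\beta$'' is \emph{not} automatic for an arbitrary rotation making $\beta=\max\{\re z:z\in\Lambda_1(T)\}$. For instance, with $T=\operatorname{diag}(0,i)$ and $\beta=0$ one has $\re T=0$, whose $0$-eigenspace is all of $\CC^2$, while the reducing $0$-eigenspace of $T$ is one-dimensional. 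What you need is to pick the rotation so that the supporting line $\{\re z=\beta\}$ meets $\Lambda_1(T)$ only at $\beta$; this is possible precisely because $\beta$ is a corner (choose the normal direction in the interior of the normal cone at $\beta$), and then $\beta\notin\Lambda_1(C)$ forces $\beta\notin\sigma(\re C)$ and the eigenspace identification follows. A cleaner alternative avoids the eigenspace comparison entirely: once $\tr(TP)=k\beta$ for some rank-$k$ projection $P$, write $k\beta=\sum_{i=1}^k\langle Tx_i,x_i\rangle$ for an orthonormal basis of $\operatorname{ran}P$; since $\beta$ is an extreme point of $\Lambda_1(T)$ each summand must equal $\beta$, and then your own $(2)\Rightarrow(3)$ computation applied to each $x_i$ places $\operatorname{ran}P$ inside the $\beta I_\ell$ summand, giving $k\le\ell$ directly.
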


In particular, Figure \ref{n5i1k2} shows an example of how the corner $\beta$ can disappear as soon as $k>m$. We note also that in the same example two new corners appear in $\Lambda_2(T)$, which are not eigenvalues. Thus there is no Donoghue's Theorem \cite{donoghue1957numerical} for $k\geq2$. 
\end{remark}

\begin{remark}\label{remark: extreme nonempty}
It was proven in \cite[Proposition 2.2]{choi2006higherB} that $\Lambda_k(T)$ is at most a singleton when $k>n/2$. In the opposite direction, it was shown in \cite{LiPoonTse-ConditionNonEmpty_2009} that, for $T\in M_n(\CC)$, $\Lambda_k(T)$ is always nonempty if $k<n/3+1$, and that it can be empty as early as $k=n/3+1$ in specific examples.
The example they give is of a normal operator, and they mention that their example can be perturbed to obtain a non-normal example. Here, \cref{theorem: Omegak of T alpha beta m} gives us a natural non-normal example. Indeed, in the context of \cref{theorem: Omegak of T alpha beta m} their $n$ becomes $n+m$; if $k=(n+m)/3+1$ and $m>(n-3)/2$, then 
\[
k=\frac{n+m}3+1>\frac{n+\frac{n-3}2}3+1=\frac{n-1}2+1=\frac{n+1}2.
\]
If we also require $m<\tfrac{n+1}2$, it follows that $k>m$.  
Taking $\alpha=0$, $\beta\geq1$, condition (7) in \cref{theorem: Omegak of T alpha beta m} guarantees that $\Lambda_k(J_n(0)\oplus \beta I_m)=\emptyset$. So, for instance, with $n=4$, $m=2$, $k=(n+m)/3+1=3$  we have that $\Lambda_3(J_4(0)\oplus I_2)=\emptyset$ and $3=k=6/3+1$. Or, for another example, $\Lambda_5(J_8(0)\oplus I_4)=\emptyset$, where $k=5=\frac{12}3+1$.

It is also possible to find cases where our examples have nonempty $\Lambda_k(T)$ for fairly big $k$. Most examples in the literature of these extreme situations are normal, while---as we mentioned---ours are non-normal. One straightforward way to force the issue is to take very large $m$ (the size of the scalar block) as then  we will always have $\Lambda_k(T^0_{\alpha,\beta})\ne\emptyset$ for $k=m$. But nonempty higher rank numerical ranges for big $k$ appear in our examples even without the need of a  big $m$ relative to $n$. 

We see from \cref{theorem: Omegak of T alpha beta m} that $\Lambda_k(J_n(\alpha)\oplus \beta I_m)=\emptyset$ when $k>m$ and $\cos\psi_{k,m}<0$. The condition $\cos\psi_{k,m}\geq0$ is $\frac{(k-m)\pi}{n+1}\leq\frac\pi2$, which we write as  $k\leq m+\tfrac{n+1}2$. When $n$ is odd and $k=m+\tfrac{n+1}2$, we have $\cos\psi_{k,m}=0$. So to have $\Lambda_k(J_n(\alpha)\oplus \beta I_m)\ne\emptyset$ with the biggest possible $k$, we need (by 6 and 7 in \cref{theorem: Omegak of T alpha beta m}) that $|\beta-\alpha|=0$. We also need $k\leq n+m-1$ as $\Lambda_{n+m}(T^0_{\alpha,\beta})=\emptyset$. The condition $m+(n+1)/2\leq m+n-1$ forces $n\geq3$ and the equality can only occur when $n=3$. 

To see an example of this consider $T=J_3(0)\oplus 0_m\in M_{3+m}(\CC)$. If we take $k=2+m$, then $k>m$ and $\cos\psi_{k,m}=0$. As $|\beta-\alpha|=0$, we get from \cref{theorem: Omegak of T alpha beta m} that $\Lambda_{2+m}(T)=\{0\}$. An explicit rank-$(m+2)$ projection $P$ with $P(J_3(0)\oplus 0_m)P=0$ is given by 
\[
P=\begin{bmatrix} 1&0&0\\0&0&0\\0&0&1\end{bmatrix}\oplus I_m. 
\]

Similarly, consider $n=5$. Now $\tfrac {n+1}2=3<4=n-1$. Since $\cos\psi_{4+m}=\cos \tfrac {4\pi}6=-\tfrac 12$, we have that $\Lambda_{4+m}(J_5(\alpha)\oplus \beta I_m)=\emptyset$ for any $\alpha,\beta$. But $\Lambda_{3+m}(J_5(0)\oplus 0_m)=\{0\}$ by case 4 in \cref{theorem: Omegak of T alpha beta m}. As $\Lambda_3(J_5(0))=\{0\}$, it is enough to find a projection $Q\in M_5(\CC)$, of rank 3, such that $QJ_5(0)Q=0$. An easy concrete realization of such $Q$ is 
\[
Q=\begin{bmatrix} 
1&0&0&0&0\\
0&0&0&0&0\\
0&0&1&0&0\\
0&0&0&0&0\\
0&0&0&0&1\\
\end{bmatrix}.
\]
If we put  $P=Q\oplus I_m$, then $P$ is a projection of rank $3+m$ and we have $P(J_5(0)\oplus 0_m)P=0_{5+m}$. 

In general, if $n=2\ell+1$, then $\Lambda_{\ell+1}(J_{2\ell+1}(0))=\{0\}$ and we can form $Q=\sum_{j=1}^{\ell+1}E_{2j-1,2j-1}$ to get a rank-$(\ell+1)$ projection $Q$ with $QJ_{2\ell+1}(0)Q=0$. Indeed, 
\[
QJ_{2\ell+1}(0)Q=\sum_{j,h=1}^{\ell+1}\sum_{k=1}^{2\ell+1}E_{2j-1,2j-1}E_{k,k+1}E_{2h-1,2h-1}=0,
\]
since $k$ and $k+1$ cannot be both odd. Then $P=Q\oplus I_m$ is a rank-$(\ell+1+m)$ projection with $P(J_{2\ell+1}(0)\oplus 0_m)P=0_{2\ell+1+m}$, showing explicitly (note that it also follows directly from case 6 in \cref{theorem: Omegak of T alpha beta m}) that $\Lambda_{\ell+1+m}(J_{2\ell+1}(0)\oplus 0_m)=\{0\}$. For $k=\ell+2+m$, we have $\cos\psi_{\ell+2+m,m}=\cos\frac{(\ell+2)\pi}{2\ell+2}<0$, so $\Lambda_{\ell+2+m}(J_{2\ell+1}(0)\oplus 0_{m})=\emptyset$ by case 7 in \cref{theorem: Omegak of T alpha beta m}.
\end{remark}

\begin{question}
The only way to have $T\in M_n(\CC)$ with $\Lambda_n(T)\ne\emptyset$ is to have $T=\beta I$ for some $\beta$. We see from \cref{remark: extreme nonempty} that $\Lambda_{2+m-1}(J_2(0)\oplus 0_m)\ne\emptyset$, and $\Lambda_{3+m-1}(J_3(0)\oplus 0_m)\ne\emptyset$, while $\Lambda_{n+m-1}(J_n(\alpha)\oplus \beta I_m)=\emptyset$ for $n\geq4$ and any $\alpha,\beta$. 
This suggests the following question:
Given $n\geq4$, does there exist non-normal $T\in M_n(\CC)$ with $\Lambda_{n-1}(T)\ne\emptyset$? The existence of a normal irreducible $T\in M_n(\CC)$, not a scalar multiple of the identity, with $\Lambda_{n-1}(T)\ne\emptyset$  was established in \cite[Theorem 3]{LiPoonTse-ConditionNonEmpty_2009}.
\end{question}

\begin{remark}
The following result due to J. Anderson. There is a nice proof in  \cite{TamYang-CircularSymmetry1999}, where it is attributed to Pei-Yuan Wu (who published more general results in \cite{Wu2011}). An infinite-dimensional version appears in \cite{gau2006anderson}, where the authors briefly discuss the story of the theorem and the various proofs that have been published.

\begin{proposition}[J. Anderson]\label{proposition: numerical range is a disk}
Let $n\geq 2$ and $T\in M_n(\CC)$. Let $\alpha\in\CC$, $r>0$. If $\Lambda_1(T)\subset B_r(\alpha)$ and $|\Lambda_1(T)\cap \partial B_r(\alpha)|\geq n+1$, then $\Lambda_1(T)=B_r(\alpha)$. 
\end{proposition}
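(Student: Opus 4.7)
The plan is to reduce to the case $\alpha=0$, $r=1$ (which we can do since $\Lambda_1$ respects translation and is positively homogeneous: $\Lambda_1(aT+bI)=a\Lambda_1(T)+b$ for $a>0$, $b\in\CC$) and then to analyze the real-valued function
\[
P(\theta):=\det\bigl(I-\re(e^{-i\theta}T)\bigr),\qquad \theta\in\RR.
\]
First I would observe that $P$ is a trigonometric polynomial in $\theta$ of degree at most $n$: the Hermitian matrix $I-\re(e^{-i\theta}T)$ has entries of the form $\delta_{ij}-\tfrac12(e^{-i\theta}t_{ij}+e^{i\theta}\overline{t_{ji}})$, which are trigonometric polynomials of degree at most one, and its $n\times n$ determinant therefore has trigonometric degree at most $n$. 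Second, by \eqref{equation: numerical range as convex}, the hypothesis $\Lambda_1(T)\subset B_1(0)$ forces $\lambda_1(\re(e^{-i\theta}T))\leq 1$ for every $\theta$, so every eigenvalue $\lambda_k(\theta)$ of $\re(e^{-i\theta}T)$ is at most $1$ and each factor $1-\lambda_k(\theta)$ of $P(\theta)=\prod_k\bigl(1-\lambda_k(\theta)\bigr)$ is nonnegative; hence $P\geq 0$ on $\RR$.

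Third, for each of the $n+1$ distinct points $z_j=e^{i\theta_j}\in \Lambda_1(T)\cap \partial B_1(0)$, plugging $\mu=z_j$ into \eqref{equation: numerical range as convex} yields $1=\re(e^{-i\theta_j}z_j)\leq \lambda_1(\re(e^{-i\theta_j}T))\leq 1$, so $1$ is an eigenvalue of $\re(e^{-i\theta_j}T)$ and $P(\theta_j)=0$. Thus $P$ vanishes at $n+1$ distinct points modulo $2\pi$.

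The crux of the argument is then the following zero-counting fact: a nonnegative trigonometric polynomial of degree at most $n$ that vanishes at $n+1$ distinct points of $[0,2\pi)$ must be identically zero. To see this, substitute $z=e^{i\theta}$, so that $P(\theta)=\sum_{k=-n}^{n}c_kz^k$, and consider $R(z):=z^nP(\theta)=\sum_{j=0}^{2n}c_{j-n}z^j$, which is an honest polynomial of degree at most $2n$. Nonnegativity of $P$ forces each $\theta_j$ to be a local minimum, so $P'(\theta_j)=0$; via $dz/d\theta=iz\neq 0$ this transfers to $R$ having a \emph{double} zero at each $z_j=e^{i\theta_j}$. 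Counting multiplicities, $R$ has at least $2(n+1)=2n+2$ zeros while having degree at most $2n$, forcing $R\equiv 0$ and therefore $P\equiv 0$.

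Finally, $P\equiv 0$ means $1$ is an eigenvalue of $\re(e^{-i\theta}T)$ for every $\theta$, so $\lambda_1(\re(e^{-i\theta}T))=1$ identically. Applying \eqref{equation: numerical range as convex} one last time gives
\[
\Lambda_1(T)=\bigcap_{\theta\in[0,2\pi]}\bigl\{\mu\in\CC:\ \re(e^{-i\theta}\mu)\leq 1\bigr\}=B_1(0),
\]
as desired. The main obstacle is the even-multiplicity step: one needs to carefully pass from $\theta$-derivatives to $z$-derivatives and check that nonnegativity genuinely produces a double root at each prescribed zero (rather than only a simple one); an alternative is to invoke the Fej\'er--Riesz factorization $P(\theta)=|q(e^{i\theta})|^2$ for some polynomial $q$ of degree at most $n$, which immediately bounds the number of distinct zeros by $n$.
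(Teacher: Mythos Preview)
The paper does not actually supply a proof of \cref{proposition: numerical range is a disk}; it quotes the result and defers to \cite{TamYang-CircularSymmetry1999} and \cite{gau2006anderson}. So there is no in-paper argument to compare against.

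Your argument is correct and is essentially the standard proof via the boundary-generating (Kippenhahn) determinant. Two small remarks. First, your appeal to \eqref{equation: numerical range as convex} for the bound $\lambda_1(\re(e^{-i\theta}T))\le 1$ is slightly indirect; the cleaner justification is the identity $\lambda_1(\re(e^{-i\theta}T))=\max_{\mu\in\Lambda_1(T)}\re(e^{-i\theta}\mu)$, which follows immediately from the variational characterization of the top eigenvalue of a Hermitian matrix. Second, the even-multiplicity step is fine exactly as you wrote it: with $\tilde P(z)=\sum_{k=-n}^{n}c_kz^k$ one has $P'(\theta)=ie^{i\theta}\tilde P'(e^{i\theta})$, so $P(\theta_j)=P'(\theta_j)=0$ forces $\tilde P(z_j)=\tilde P'(z_j)=0$, and then $R(z)=z^n\tilde P(z)$ inherits a double zero at each $z_j$. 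The Fej\'er--Riesz route you mention as an alternative is indeed slicker, since writing $P(\theta)=|q(e^{i\theta})|^2$ with $\deg q\le n$ immediately caps the number of distinct real zeros of $P$ at $n$.
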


One might be tempted to try to think of $\Lambda_k(T)$ as the numerical range of some amplification/dilation of $T$. Actually, this works for some of our examples: for instance, we see from \cref{theorem: Omegak of T alpha beta m} that $\Lambda_2(J_5(0)\oplus I_4)=\conv(B_{1/2}(0)\cup\{1\})=\Lambda_1(J_2(0)\oplus I_4)$. But, at the same time  \cref{proposition: numerical range is a disk}, together with some of our examples above show that this is not the case in general. Concretely, if we look at the example from \cref{n5i1k2BC}, namely $\Lambda_2(J_n(-1-i)\oplus (1-2i)I_1)$, we can see that the whole set is contained in the disk of radius $\cos\psi_2=\cos\tfrac{(2-1)\pi}{5+1}=\tfrac{\sqrt3}2$ centered at $-1-i$, and it shares a nontrivial part of the arc; thus \cref{proposition: numerical range is a disk} implies that $\Lambda_2(J_n(-1-i)\oplus (1-2i)I_1)$ is not the numerical range of any matrix. We also conclude that there is no analogue of \cref{proposition: numerical range is a disk} for any $k>1$.

\end{remark}

\begin{remark}
An easy and well-known property of the higher numerical range is that 
\begin{equation}\label{equation: higher of direct sum}
\Lambda_k(T\oplus S)\supset \Lambda_k(T)\cup\Lambda_k(S).
\end{equation}
 As $\Lambda_k(T\oplus S)$ is convex, it will always contain $\conv\{\Lambda_k(T)\cup\Lambda_k(S)\}$. But it is often the case that the inclusion is strict, as for instance when $\Lambda_k(T\oplus T)$ with $T\in M_n(\CC)$ and $k>n$. In \cref{theorem: Omegak of T alpha beta m}, case 2,  we see that the inclusion \eqref{equation: higher of direct sum} can be an equality for several values of $k$; indeed, under the conditions of case 2 we have $\Lambda_k(J_n(\alpha))=B_{\cos\phi_k}(\alpha)$, and $\Lambda_k(\beta I_m)=\{\beta\}$ and $\Lambda_k(J_n(\alpha)\oplus\beta I_m)=\conv\Lambda_k(J_n(\alpha))\cup \Lambda_k(\beta I_m)$. 
\end{remark}

\begin{remark}
If $T_1,T_2$ are unitarily equivalent, then $\Lambda_k(T_1)=\Lambda_k(T_2)$ for all $k$. The converse is known to be false in general \cite{GauWu2013KippenhahnPolynomials}. The class of matrices of the form $J_n(\alpha)\oplus \beta I_m$ is rigid enough that the family of higher rank numerical ranges characterizes unitary equivalence (equality, actually). Namely, 
\begin{corollary}\label{corollary: unitary equivalence}
Let $T_j=J_{n_j}(\alpha_j)\oplus\beta_j I_{m_j}$, $j=1,2$, such that $n_1+m_1=n_2+m_2$ and such that for all $k$ we have $\Lambda_k(T_1)=\Lambda_k(T_2)$. Then $T_1=T_2$. 
\end{corollary}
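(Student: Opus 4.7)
The plan is to read the parameters $\alpha_j,n_j,m_j,\beta_j$ off the family $\{\Lambda_k(T_j)\}_k$ by using the case table in \cref{theorem: Omegak of T alpha beta m}.

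\emph{Step 1 (identify $\alpha$, $n$, $m$).} At $k=1$ the condition $k\le m$ always holds (since $m_j\geq 1$), so case 3 cannot occur; hence $\Lambda_1(T_j)$ is either the closed disk $B_{\cos(\pi/(n_j+1))}(\alpha_j)$ (case 1) or that same disk together with a proper ``cone-tip'' region having $\beta_j$ as a corner (case 2). A pure disk and a disk with a cone tip sticking out cannot coincide as sets, so both $\Lambda_1(T_j)$ are of the same type. Reading the center and radius of the circular part then yields $\alpha_1=\alpha_2=:\alpha$ and $n_1=n_2=:n$, hence $m_1=m_2=:m$ from the hypothesis. If we are in case 2 the corner immediately gives $\beta_1=\beta_2$ and we are done.

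\emph{Step 2 (identify $\beta$ when $\Lambda_1$ is a disk).} Now $r_j:=|\beta_j-\alpha|\le\cos(\pi/(n+1))$ for $j=1,2$. Let $k^*$ be the smallest $k\in\{2,\ldots,\lfloor n/2\rfloor\}$ with $r_j>\cos\phi_k$ (the same for both $T_j$, since the $\Lambda_k$ agree), or $+\infty$ if none exists. Suppose $k^*$ is finite; then $\Lambda_{k^*}(T_j)$ falls in case 2 or case 3. In case 2, and in case 3 when $r_j\le\cos\psi_{k^*,m}$, the point $\beta_j$ is a corner of $\Lambda_{k^*}(T_j)$, and equality forces $\beta_1=\beta_2$. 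In case 3 with $r_j>\cos\psi_{k^*,m}$ the cone tip is cut off by the outer circle of radius $\cos\psi_{k^*,m}$, but by \cref{proposition: understanding the shape} the two straight boundary segments of $\Lambda_{k^*}(T_j)$ lie on the tangent lines from $\beta_j$ to $B_{\cos\phi_{k^*}}(\alpha)$. Their common axis of symmetry recovers $\psi_j=\arg(\beta_j-\alpha)$, and their slope $\pm\cot\delta_{k^*}$ recovers $\delta_{k^*}$ and hence $r_j$ via $\cos\delta_{k^*}=\cos\phi_{k^*}/r_j$. So $\beta_j=\alpha+r_je^{i\psi_j}$ is again determined by $\Lambda_{k^*}(T_j)$.

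\emph{Step 3 ($k^*=+\infty$).} Then every $\Lambda_k(T_j)$ with $k\le\lfloor n/2\rfloor$ is a pure disk, so $r_j\le\cos\phi_{\lfloor n/2\rfloor}$. Move to $k^\sharp=\lfloor n/2\rfloor+1$. An easy arithmetic check using $m\geq 1$ gives $\cos\phi_{\lfloor n/2\rfloor}\le\cos\psi_{k^\sharp,m}$ (the difference of the two angles is $(m-1)\pi/(n+1)\ge 0$), which places us in case 6 when $n$ is even (so $\Lambda_{k^\sharp}(T_j)=\{\beta_j\}$) and in case 4 when $n$ is odd (so $\Lambda_{k^\sharp}(T_j)=[\alpha,\beta_j]$). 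In either situation $\beta_j$ is plainly visible in the set, and equality of the two gives $\beta_1=\beta_2$.

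The main technical obstacle is the final subcase of Step 2: once the cone tip is truncated, $\beta_j$ is not a point of $\Lambda_{k^*}(T_j)$ at all, so one cannot simply read it off as a corner. It is here that the tangency statement of \cref{proposition: understanding the shape} does the work, letting us reconstruct $\beta_j$ from the directions and slopes of the linear edges of $\Lambda_{k^*}(T_j)$. The rest of the argument amounts to checking the branches of the table in \cref{theorem: Omegak of T alpha beta m} against each other.
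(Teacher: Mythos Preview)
Your proof is correct and follows essentially the same strategy as the paper's: use $k=1$ to read off $\alpha$ and $n$ (hence $m$), then locate some $k$ where the boundary of $\Lambda_k$ carries the two straight tangent edges, whose meeting point is $\beta$; failing that, pass to $k=\lfloor n/2\rfloor+1$ and read $\beta$ off directly. Two small differences are worth noting. First, where you split Step~2 into ``$\beta$ is a corner'' versus ``the cone tip is truncated'' and reconstruct $\beta$ from the axis of symmetry and the slope $\pm\cot\delta_{k^*}$, the paper makes the single observation that in cases~2 and~3 the straight boundary segments (extended if necessary) always intersect at $\beta$; this sidesteps the need to check that $T_1$ and $T_2$ fall into the same subcase. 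Second, your Step~3 is actually more careful than the paper's: you distinguish even $n$ (landing in case~6) from odd $n$ (landing in case~4), whereas the paper only argues that case~7 does not occur at $k=\lfloor n/2\rfloor+1$ and tacitly assumes case~6, which for odd $n$ is not literally the case that applies at that value of $k$.
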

\begin{proof} 
We refer to the cases that appear in the table in \cref{theorem: Omegak of T alpha beta m}. Consider first $k=1$. From \cref{theorem: Omegak of T alpha beta m} we know that both $T_1,T_2$ fall in the same of cases 1 or 2. In both cases we have that part of the boundary of $\Lambda_1(T_j)$ is an arc of a circle of radius $\cos\phi_k$ centered at $\alpha_j$ (the number $\cos\phi_k$ is in principle different for $T_1$ and $T_2$, but since we are arguing that in this case it is the same for both, there is no need for a particular notation for that). Thus $\alpha_1=\alpha_2$, and looking at the cosines we need $1/(n_1+1)=1/(n_2+1)$, so $n_1=n_2$ and then $m_1=m_2$. 

If any of cases 2 or 3 arise for some $k$, 
as the (extensions of the, in case 3)  line segments intercept at $\beta$ (recall that $R^\psi_{|\beta-\alpha|,k}=\alpha+e^{i\psi}R_{|\beta-\alpha|,k}$ and $\alpha+e^{i\psi}|\beta-\alpha|=\beta$), we get that $\beta_1=\beta_2$. 

If neither case 2 nor 3 arises, we are in case 1 for all $1\leq k\leq n/2$ for both $T_1$ and $T_2$. So $|\beta_1-\alpha|$,$|\beta_2-\alpha|\leq\cos\phi_k<1$ for all such $k$. Thus
\begin{equation}\label{equation: condition when neither 2 nor 3}
|\beta_j-\alpha|\leq\cos\frac{\lfloor n/2\rfloor}{n+1}\,\pi.
\end{equation}
If case 6 arises for some $k$, we get $\beta_1=\beta_2$. And case 6 will always arise in the presence of \eqref{equation: condition when neither 2 nor 3}; for if case 7 occurs already for $k=\lfloor n/2\rfloor + 1$, we have $m_j<\lfloor n/2\rfloor + 1$ so
\[
k-m_j=\lfloor \tfrac n2\rfloor + 1-m_j\leq\lfloor\tfrac n2\rfloor
\]
and thus
\[
|\beta_j-\alpha|>\cos\psi_{k,m_j}=\cos \frac{\lfloor n/2\rfloor + 1-m_j}{n+1}\,\pi
\geq \cos\frac{\lfloor n/2\rfloor}{n+1}\,\pi\geq|\beta_j-\alpha|,
\]
a contradiction. 
\end{proof}
\end{remark}

\section{Acknowledgements}

This work has been supported in part by the Discovery Grant program of the Natural
Sciences and Engineering Research Council of Canada grant RGPIN-2015-03762.

\end{document}